\newcommand{\indep}[4]{#1\underset{#2}{\downfree^{#4}}#3}
\newcommand{\notIndep}[4]{#1\underset{#2}{{\not\downfree}^{#4}}#3}
\theoremstyle{plain}
\newtheorem{theorem}{Theorem}[section]
\newtheorem{proposition}[theorem]{Proposition}
\newtheorem{fact}[theorem]{Fact}
\newtheorem{lemma}[theorem]{Lemma}
\newtheorem{corollary}[theorem]{Corollary}
\theoremstyle{remark}
\newtheorem{remark}[theorem]{Remark}
\theoremstyle{definition}
\newtheorem{definition}[theorem]{Definition}
\newcommand{\tp}{\textup{tp}}
\newcommand{\acl}{\textup{acl}}
\newcommand{\dcl}{\textup{dcl}}
\renewcommand{\subset}{\subseteq}
\renewcommand{\supset}{\supseteq}
\renewcommand{\epsilon}{\varepsilon}
\newcommand{\Aut}{\textup{Aut}}
\newcommand{\Ker}{\textup{Ker}}
\renewcommand{\Im}{\textup{Im}}
\newcommand{\RV}{\textup{RV}}
\renewcommand{\d}{\mathbf{d}}
\newcommand{\f}{\mathbf{f}}
\begin{document}
\author{Akash HOSSAIN}
\thanks{Part of the research was carried out during a 2 months visit of the
author to the University of Münster, which was supported by the German
Research Foundation (DFG) via HI 2004/1-1 (part of the French-German
ANR-DFG project GeoMod).}
\title[Forking and dividing in expansions of PSES]{Transfer principles for forking and dividing in expansions of pure short exact sequences of Abelian groups}
\date{}
\begin{abstract}
In their article about distality in valued fields, Aschenbrenner, Chernikov, Gehret and Ziegler proved resplendent Ax-Kochen-Ershov principles for quantifier elimination in pure short exact sequences of Abelian structures. We study how their work relates to forking, and we prove Ax-Kochen-Ershov principles for forking and dividing in this setting.
\end{abstract}
\maketitle

\section{Introduction}
Suppose we have a family $(M_i)_i$ of first-order structures, the model theory of which is fairly well-understood, and we use the $M_i$ to build a common expansion $M$ (the model theory of which we want to understand). In this setting, when one wants to understand a model-theoretic notion in $M$ (in this paper, we are interested in forking), it makes sense to try to understand how this property behaves in the $M_i$, and what is the relation between its behavior in the $M_i$ and in $M$. This question is of even greater interest when the $M_i$ are stably-embedded in $M$, and when they are orthogonal.
\par The model theory of \textit{Henselian valued fields} is a field of study where a great volume of work has been done in this regard. In this setting, one can often reduce to working in the model theory of the value group $\Gamma$ and the residue field $k$ of the valued field at hand, thanks to what we call Ax-Kochen-Ershov (AKE)-principles. Such transfer principles are not always possible, and when they are not, the obstruction often comes from \textit{immediate extensions}, which cannot be fully-described in terms of their reductions to $k$ and $\Gamma$. There is a reduct which, in some sense, encodes all the data about the extensions where there is no such obstruction: the \textit{leading-term structure} $\RV$ (see \cite{BASARAB199151}, \cite{flenner}), which sits inside the short exact sequence of Abelian groups:
$$
1\longrightarrow k^*\longrightarrow\RV\longrightarrow\Gamma\longrightarrow 0
$$
In this nicer structure, AKE principles are easier to obtain, and they often suffice to prove strong results about the whole valued field (see for instance \cite{suitesExactes}, Section 5). This particular example motivates the study of the (more abstract) model theory of short exact sequences of Abelian groups. The above short exact sequence is not taken as it is, we expand it by the ordered group structure on $\Gamma$, and the ring structure on $k$. Moreover, reductions to $\RV$ can also be done in a more \textit{resplendent} setting, where one looks at an expansion of a Henselian valued field, and the induced expansion of its leading-term structure. This is applied in particular to the model theory of differential/difference valued fields (see for instance \cite{suitesExactes}, Section 7).
\par Therefore, the question we ask ourselves is: given a short exact sequence of Abelian groups:
$$
0\longrightarrow A\longrightarrow B\longrightarrow C\longrightarrow 0
$$
with an arbitrary expansion on $A$ and $C$, when can a model-theoretic problem on the whole structure be reduced to the reducts $A$ and $C$?
\par Our main reference is the work of Aschenbrenner-Chernikov-Gehret-Ziegler (\cite{suitesExactes}, Section 4), where they obtain such transfer principles for quantifier elimination and distality, and variants of these principles in a more general setting involving Abelian structures.
\par In this paper, we work in the same general setting, and we use their results to establish transfer principles for forking and dividing. Given $M$ a pure short exact sequence of Abelian structures:
$$
0\longrightarrow A\longrightarrow B\longrightarrow C\longrightarrow 0
$$
with arbitrary expansions on $A$ and on $C$, given substructures $\Gamma\subset\Delta\subset M^{eq}$, and $u$ a tuple from $M$, we work out when $u$ is independent from $\Delta$ over $\Gamma$ with respect to forking and dividing ($\indep{u}{\Gamma}{\Delta}{\f}$ and $\indep{u}{\Gamma}{\Delta}{\d}$), in terms of what happens in $A$ and $C$. In \cref{defACtypes}, we isolate the notion of $A$\textit{-types} and $C$\textit{-types}, two partial types $\tp_A(u/\Delta)$ and $\tp_C(u/\Delta)$ realized by $u$, which encode the relations between $u$ and $\Delta$ taking place in $A$ and $C$. We consider in \cref{defTypesPartiels} a partial type $p_0$ over $\Gamma$, which is satisfied by $u$. Our main result is as follows:
\begin{theorem}[see \cref{thmDeviation}]
Suppose $\Gamma$ and $\Delta$ satisfy the hypothesis \ref{GammaAlg} and \ref{DeltaReel} from \cpageref{GammaAlg}. Then we have $\indep{u}{\Gamma}{\Delta}{\f}$ if and only if both partial types $\tp_C(u/\Delta)$ and $p_0\cup \tp_A(u/\Delta)$ do not fork over $\Gamma$.
\par Likewise (see the end of \cref{PSESSectDiv}), we have $\indep{u}{\Gamma}{\Delta}{\d}$ if and only if for each of those two partial types, there exists some realization $v$ such that $\indep{v}{\Gamma}{\Delta}{\d}$.
\end{theorem}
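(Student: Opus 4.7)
The plan is to leverage the resplendent quantifier elimination of Aschenbrenner-Chernikov-Gehret-Ziegler, which reduces every formula of $M$ over $\Delta$ to a Boolean combination of formulas living in $A$ and $C$ over appropriate parameters. Under this decomposition, the partial types $\tp_A(u/\Delta)$, $\tp_C(u/\Delta)$ and the auxiliary type $p_0$ over $\Gamma$ together capture the full content of $\tp(u/\Delta)$, and the hypotheses \ref{GammaAlg} and \ref{DeltaReel} ensure that no spurious imaginaries from $M^{eq}$ obstruct the amalgamation. I would first treat the easy direction: if $\indep{u}{\Gamma}{\Delta}{\f}$, then both partial types, being implied by $\tp(u/\Delta)$ (the inclusion of $p_0$ in the $A$-side is harmless since $p_0$ is consistent with $\tp(u/\Gamma)$ by construction), cannot fork over $\Gamma$, by a formal monotonicity argument.

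For the converse, the strategy is to build a non-forking realization of $\tp(u/\Delta)$ by amalgamation. First I would fix non-forking extensions of both $\tp_C(u/\Delta)$ and $p_0\cup\tp_A(u/\Delta)$ to a suitably saturated model $N\supset\Delta$, realized by tuples $v_C$ and $v_A$ respectively; then I would combine the $C$-projection of $v_C$ with fibre data from $v_A$ via a section of the short exact sequence in $N$, producing a tuple $v$ in $M$ that realizes the full type by ACGZ's reduction. To verify non-forking of $v$, I would decompose each formula $\phi(v;\delta)\in\tp(u/\Delta)$ into its $A$- and $C$-components and invoke the independence of $v_A$ and $v_C$ in the respective reducts, using stable embeddedness and orthogonality of $A$ and $C$ in $M$. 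The main obstacle is the coherent gluing of these sections over $\Gamma$, where \ref{GammaAlg} and \ref{DeltaReel} enter crucially to eliminate algebraic and imaginary obstructions coming from the interaction of the two reducts.

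For dividing, the argument follows the same template with Morley/indiscernible-sequence characterizations in place of direct non-forking extensions. Given witnesses $v_A$ and $v_C$ of $\d$-independence of the two partial types, I would build $\Gamma$-indiscernible sequences of conjugates of $\Delta$ in each reduct over which the respective data remains consistent, and amalgamate them through the ACGZ reduction into a $\Gamma$-indiscernible sequence in $M$; the consistency of the amalgamated family then produces a realization of $\tp(u/\Delta)$ that is $\d$-independent from $\Delta$ over $\Gamma$. Conversely, if such a realization exists, its $A$- and $C$-reduct types inherit the $\d$-independence via the natural projections to the reducts, which is the easy half.
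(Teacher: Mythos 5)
Your proposal is broadly sound, but it takes a different route for the forking direction than the paper does, and a few of the steps you describe need more care than the sketch suggests.

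For forking, the paper does \emph{not} proceed by amalgamating non-forking extensions. Instead, it works at the level of the Boolean algebra $\mathcal{B}_0^{M^{eq}}$: Lemma \ref{lemmeAKEDeviation} shows that for a rectangle $X_A\wedge X_C$ built from $F_A$- and $F_C$-formulas, membership in the sum of ideals $I_A+I_C$ factors componentwise, and Theorem \ref{thmDeviation} then shows definable-set-by-definable-set that every $\Delta$-definable set containing $u$ fails to fork, using the rectangle structure of $S^\Delta$ from Proposition \ref{homeoRectangle}. Your proposal is the ``dual'' construction: pass to a $|\Gamma|^+$-saturated $N\supset\Delta$, take non-dividing global realizations $v_A\models p_0\cup\tp_A(u/\Delta)$ and $v_C\models\tp_C(u/\Delta)$, amalgamate them by Proposition \ref{surjection} (applied over $N$, with $f_A^N$), and invoke Theorem \ref{thmDivision} (again over $N$) to conclude that the amalgam $w$ satisfies $\indep{w}{\Gamma}{N}{\d}$, then use Fact \ref{deviationExtGlobale}. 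This is the content of Corollary \ref{classifDeviation}, which the paper obtains as a \emph{consequence} of Theorem \ref{thmDeviation}; you are proposing to reverse the logical order and derive the theorem from the amalgamation. That is a legitimate alternative route. What the paper's Boolean-algebra argument buys is that it never has to pass through $N$ at all for the $\Delta$-level statement, which sidesteps some bookkeeping about $E^+_N$ versus $E^+_\Delta$; your route has to account for the fact that $f_A^N$ strictly refines $f_A^\Delta$, and needs Theorem \ref{thmDivision} applied with $N$ in place of $\Delta$.

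There are three points you gloss over that actually require an argument. First, you write ``via a section of the short exact sequence,'' but the sequence need not split; the amalgamation in Proposition \ref{surjection} is obtained from purity and saturation, not from a section. Second, to apply Propositions \ref{surjection} and \ref{homeoRectangle} over $N$ and conclude that the amalgam $w$ both realizes $p_0$ and satisfies $\tp(w/\Delta)=\tp(u/\Delta)$, you need the $C$-witness $v_C$ to realize $p_0$, and $w$ and $u$ to realize $p_\Delta$; this is where \ref{GammaAlg} and \ref{DeltaReel} actually do work. That $\tp_C(u/\Delta)$ implies $p_0$ is true but not obvious: the cosets $\nu(t(u_B))+\phi(C)\in\Gamma$ appearing in $p_0$ are imaginary parameters, and one must use the strong stable embeddedness of $C^*$ (Corollary \ref{stablePlongitude} over $\Delta$, which is real-generated) to see that they lie in $\dcl^{eq}(C^*(\Delta))$, so that the corresponding conditions become genuine $C$-formulas over $\Delta$. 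That $u\models p_\Delta$ given $\indep{u}{\Gamma}{\Delta}{\d}$ follows from Proposition \ref{divisionPDelta}. Third, ``invoke the independence of $v_A$ and $v_C$ in the respective reducts'' is misleading: the independence hypothesis is about partial types in the full structure $M$, not in the reducts $A^*(M)$ and $C^*(M)$; transferring forking to the reducts is a genuinely stronger statement that the paper only establishes under the additional model hypotheses of \cref{sectPSESModeles}.

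For the dividing half, the paper's Theorem \ref{thmDivision} works by fixing a single $\Gamma$-indiscernible sequence of $\Delta$-conjugates and producing a $\Gamma w$-indiscernible copy, performing the amalgamation of Proposition \ref{surjection} pointwise along that sequence within $M$. Your sketch of ``building indiscernible sequences in each reduct and amalgamating them'' sounds closer to the argument of Section \ref{sectPSESModeles}, which again relies on Definition \ref{defPSESModeles}; without those hypotheses you should amalgamate within $M$, as in Theorem \ref{thmDivision}, rather than trying to descend to the reducts.
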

Moreover, we give in \cref{PSESDivisionVraiHomeo} and \cref{PSESDeviationVraiHomeo} descriptions of the spaces of types over $\Delta$ extending $p_0$ which do not fork/divide over $\Gamma$, in terms of the $A$-types and the $C$-types.
\par In \cref{PSESExemple}, we give an example which illustrates why $p_0$ is necessary in our main theorem. This partial type $p_0$ essentially encodes \textit{non-lifted torsors}, cosets in (some quotient of) $B$ of (some quotient of) $A$, from which we do not have a bijection into $A$ which is definable with the parameter sets at hand. The obstruction to AKE principles comes from the fact that we cannot treat those definable sets as copies of (some quotient of) $A$.
\par Finally, in \cref{sectPSESModeles}, we make additional hypothesis on the parameter set $\Gamma$ (see \cref{defPSESModeles}), which are in particular satisfied when $\Gamma$ is a model. With those hypothesis, we have a much stronger AKE principle:
\begin{theorem}[see \cref{PSESModelesDivisionFinal} and \cref{PSESModelesDeviation}]
Let $U$ be the substructure generated by $u$ and by $A^*(\Gamma)\cup B(\Gamma)\cup C^*(\Gamma)$. Then we have $\indep{U}{\Gamma}{\Delta}{\d}$ if and only if $\indep{A^*(U)}{A^*(\Gamma)}{A^*(\Delta)}{\d}$ and $\indep{C^*(U)}{C^*(\Gamma)}{C^*(\Delta)}{\d}$ in the respective reducts $A^*(M)$ and $C^*(M)$.
\par Likewise, we have $\indep{U}{\Gamma}{\Delta}{\f}$ if and only if we have $\indep{A^*(U)}{A^*(\Gamma)}{A^*(\Delta)}{\f}$ and $\indep{C^*(U)}{C^*(\Gamma)}{C^*(\Delta)}{\f}$ in the respective reducts $A^*(M)$ and $C^*(M)$.
\end{theorem}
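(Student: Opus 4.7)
The plan is to deduce this stronger principle from the criterion \cref{thmDeviation} by showing that under the strengthened hypotheses on $\Gamma$ from \cref{defPSESModeles}, first, the partial type $p_0$ becomes redundant, and second, the $A$-type and $C$-type of $u$ over $\Delta$ can be replaced by honest types of the projections $A^*(U)$ and $C^*(U)$ in the corresponding reducts.

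For the first point, recall from the discussion just before the statement that $p_0$ encodes non-lifted torsors: cosets in a quotient of $B$ modulo a subgroup of $A$ that are not identified with a copy of $A$ by any map definable over the ambient parameter set. The content of \cref{defPSESModeles} should be exactly that $\Gamma$ already lifts all such torsors, and this is clear when $\Gamma$ is a model. Concretely, I would check that every such torsor visible over $\Delta$ has a representative in $\Gamma$, so every formula appearing in $p_0$ is already realized in $\Gamma$ and adjoining $p_0$ to $\tp_A(u/\Delta)$ does not affect whether non-forking over $\Gamma$ holds. This step is essentially a coset-lifting computation inside the short exact sequence.

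For the second point, the hypothesis that $U$ is generated by $u$ together with $A^*(\Gamma) \cup B(\Gamma) \cup C^*(\Gamma)$, combined with the stable embeddedness and orthogonality of $A^*(M)$ and $C^*(M)$ that are built into the pure short exact sequence setup, should yield canonical identifications of the partial types $\tp_A(u/\Delta)$ and $\tp_C(u/\Delta)$ from \cref{defACtypes} with $\tp_{A^*(M)}(A^*(U)/A^*(\Delta))$ and $\tp_{C^*(M)}(C^*(U)/C^*(\Delta))$ respectively. Standard facts about forking and dividing in stably embedded reducts then translate non-forking (resp.\ non-dividing) of these partial types in $M^{\textup{eq}}$ into non-forking (resp.\ non-dividing) over the projected parameter sets in the reducts.

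Combining steps 1 and 2 with \cref{thmDeviation} gives the forking equivalence, and running the same argument with the dividing analogue mentioned at the end of \cref{PSESSectDiv} gives the dividing equivalence. The main obstacle I expect is step 1: one must verify carefully that \cref{defPSESModeles} is strong enough to force every non-lifted torsor contributing to $p_0$ to be lifted over $\Gamma$, and not merely over $\Delta$. Once this is pinned down, step 2 becomes essentially bookkeeping, invoking the general principle that an orthogonal family of stably embedded reducts respects forking componentwise.
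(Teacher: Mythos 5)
Your plan is genuinely different from the paper's, and I think it has two real gaps rather than being a different-but-correct route.

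First, the paper does not deduce the theorem from \cref{thmDeviation}. Instead it proves \cref{PSESModelesThmDivision} (the dividing statement in terms of $f_A(u)$ and $f_C(u)$) directly by an indiscernible-sequence argument, and then derives forking from \cref{lemmeAKEDeviation} in essentially the same way as in Section~\ref{PSESDeviation}. The technical core of the whole section is \cref{PSESModeleDcleq}, which says $A^*(\dcl^{eq}(\Gamma u))=A^*(\dcl^{eq}(A^*(\Gamma)f_A(u)))$ and likewise for $C^*$. You do not identify anything playing this role, and I think your argument cannot work without it. The reason is in your step~2: the phrase ``standard facts about forking and dividing in stably embedded reducts'' does not apply here, because $\Gamma$ and $\Delta$ are \emph{not} subsets of the $A^*$-sorts or the $C^*$-sorts --- they contain elements of $B$. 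The standard stably-embedded transfer only lets you move between a full structure and its stably embedded part when the base and the extended parameter set already live in that part. Moving from forking over $\Gamma$ in $M$ to forking over $A^*(\Gamma)$ in $A^*(M)$ is precisely the nontrivial base change this theorem establishes, and the only place I see it justified is the combining step in the proof of \cref{PSESModelesThmDivision}, where one takes an $A^*(\Gamma)f_A(u)$-indiscernible sequence of $A^*$-tuples, a $C^*(\Gamma)f_C(u)$-indiscernible sequence of $C^*$-tuples, and deduces from \cref{PSESModeleDcleq} plus \cref{stablePlongitude} (applied over $\dcl^{eq}(\Gamma u)$) that the concatenated sequence is $\Gamma u$-indiscernible. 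Without that lemma there is no reason the two shuffles can be made compatible.

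Second, step~1 as you state it is also not established. Under \cref{defPSESModeles} the formulas in $p_0$ are constraints of the form $\nu(t(x_B))+\phi(C)=\nu(\beta_{(\phi,t)})+\phi(C)$ with $\beta_{(\phi,t)}\in B(\Gamma)$; these are over $\Gamma$, but they are still not consequences of $\tp_A(u/\Delta)$. If $\rho_\phi(t(u_B)-\beta_{(\phi,t)})=0$, the $A$-formula $\rho_\phi(t(x_B)-\beta_{(\phi,t)})=0$ is compatible both with $t(x_B)-\beta_{(\phi,t)}\in\phi(B)$ and with $t(x_B)-\beta_{(\phi,t)}\notin\phi(B)+\iota(A)$, so the coset of $\nu(t(x_B))$ is not pinned down and $p_0$ genuinely restricts the set of realizations. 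What you actually need is that adjoining $p_0$ to $p_A$ does not affect whether the result forks over $\Gamma$; ``every formula in $p_0$ is over $\Gamma$ and realized by $u$'' is not by itself enough, since a consistent partial type over $\Gamma$ can perfectly well fork over $\Gamma$. So the redundancy of $p_0$ would itself require a substantive argument, and you flag this as a worry but offer no way in. In short: the bottleneck you identified in step~1 is real, you underestimated the difficulty in step~2, and the missing ingredient in both cases is \cref{PSESModeleDcleq} together with the direct indiscernible-sequence combination argument.
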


Let us set up some notations. Just as in \cite{suitesExactes}, we denote by $A$, $B$, $C$ the terms of the short exact sequences at hand, therefore they will refer to multi-sorted structures that are reducts of the short exact sequence. Parameter sets will be denoted by $\Gamma$, $\Delta$, $U$. Our tuples will always be tuples of reals, and they will be denoted by $u$, $v$, $w$... The real sorts will be in the disjoint reunion of $A^*$, $B$, $C^*$, with $A^*$ an expansion of $A$, and $C^*$ an expansion of $C$. As a result, a tuple $u$ will be the concatenation of three disjoint subtuples $u_A$, $u_B$, $u_C$, with of course $u_A\in (A^*)^{<\omega}$, $u_B\in B^{<\omega}$, $u_C\in (C^*)^{<\omega}$. We shall always use those subscripts in that fashion, and we will not redefine what they refer to.
\section{Independence}
The non-forking relation is an independence notions introduced by Shelah \cite{Shelah1982-SHECTA-5} during the developments of stability theory. It plays a fundamental role in several classes of theories, notably stable, simple and NTP$_2$ theories.
\par Without going into details about its conceptual role in model theory, we briefly recall basic definitions in this section. We refer the reader to \cite{Tent2012ACI} for a more detailed study of forking and dividing. Let $M$ be a first-order structure, and let $\Gamma$, $\Delta$ be parameter subsets of $M$.
\begin{definition}
Let $\phi(x, \delta)$ be some formula with parameters $\delta$. Then $\phi(x, \delta)$ \textit{divides over $\Gamma$}, when there exists some $\Gamma$-indiscernible sequence $(\delta_i)_i$ starting at $\delta$ such that the partial type:
$$
\left\lbrace\phi(x, \delta_i)|i\right\rbrace
$$
is inconsistent.
\par We say that $\phi(x, \delta)$ \textit{forks over $\Gamma$} if, in some elementary extension, there exists a tuple $\epsilon$, and finitely-many formulas $(\psi_i(x, \epsilon))_i$ with parameters $\epsilon$, such that each $\psi_i(x, \epsilon)$ divides over $\Gamma$, and:
$$
\phi(x, \delta)\models\bigvee\limits_i\psi_i(x, \epsilon)
$$
We say that a partial type $\pi$ \textit{forks/divides over $\Gamma$} if $\pi$ implies a formula which forks/divides over $\Gamma$.
\end{definition}

\begin{proposition}\label{divisionPreimage}
If $f$ is a $\Gamma$-definable function, and $Y$ is a definable subset of the image of $f$, then $Y$ divides over $\Gamma$ if and only if $f^{-1}(Y)$ does.
\end{proposition}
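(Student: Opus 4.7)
Write $Y$ as $\phi(x,\delta)$ for some formula $\phi$ with parameters $\delta$, so that $f^{-1}(Y)$ is defined by the formula $\psi(y,\delta):=\phi(f(y),\delta)$ (using that $f$ itself is $\Gamma$-definable, so no additional parameters are needed). The plan is to show each direction by transporting a $\Gamma$-indiscernible sequence witnessing the dividing of one formula into one witnessing the dividing of the other.

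For the easy direction $(\Rightarrow)$, suppose $Y$ divides over $\Gamma$, witnessed by a $\Gamma$-indiscernible sequence $(\delta_i)_i$ starting at $\delta$ such that $\{\phi(x,\delta_i)\mid i\}$ is inconsistent. I claim the same sequence witnesses that $f^{-1}(Y)$ divides over $\Gamma$: indeed, any realization $y$ of $\{\psi(y,\delta_i)\mid i\}$ would give, by definition of $\psi$, a common realization $f(y)$ of $\{\phi(x,\delta_i)\mid i\}$, contradicting inconsistency.

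For the other direction $(\Leftarrow)$, suppose $f^{-1}(Y)$ divides, witnessed by a $\Gamma$-indiscernible sequence $(\delta_i)_i$ starting at $\delta$ making $\{\psi(y,\delta_i)\mid i\}$ inconsistent. The key observation is that the image $\Im(f)$ is $\Gamma$-definable, and the hypothesis $Y\subset\Im(f)$ translates into a $\Gamma$-formula $\chi(\delta)$ expressing that $\phi(x,\delta)\to x\in\Im(f)$. By $\Gamma$-indiscernibility, $\chi(\delta_i)$ holds for every $i$, so each $\phi(x,\delta_i)$ still has all its realizations in $\Im(f)$. If some $x$ realized $\{\phi(x,\delta_i)\mid i\}$, pick $y$ with $f(y)=x$; then $y$ would realize $\{\psi(y,\delta_i)\mid i\}$, a contradiction.

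The only subtle point, and the one worth flagging, is the appeal to indiscernibility to propagate the containment $Y\subset\Im(f)$ along the sequence; everything else is a direct unpacking of definitions. No hypothesis on $f$ beyond $\Gamma$-definability is used, and the argument works uniformly in the multi-sorted setting of the paper.
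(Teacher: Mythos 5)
Your proof is correct and takes essentially the same approach as the paper, which phrases the argument via $\bigcap_n f^{-1}(\sigma_n(Y))=f^{-1}\bigl(\bigcap_n\sigma_n(Y)\bigr)$ together with the observation that $Y$ and its $\Gamma$-conjugates lie in $\Im(f)$, while you unpack the same content directly in terms of realizations. The point you flag as subtle is the same one the paper notes, and it is in fact even simpler than you suggest: since the sequence starts at $\delta$, any common realization $x$ of $\{\phi(x,\delta_i)\mid i\}$ already satisfies $\phi(x,\delta)$ and hence lies in $\Im(f)$ by the hypothesis $Y\subset\Im(f)$ alone, so you do not actually need indiscernibility to propagate the containment along the sequence.
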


\begin{proof}
Write $Y=\phi(y, \delta)$. We may assume $M$ is $|\Gamma|^+$-saturated and strongly-$|\Gamma|^+$-homogeneous. Let $(\sigma_n)_n\in \Aut({M}/\Gamma)^\omega$ be such that the sequence $(\sigma_n(\delta))_n$ is $\Gamma$-indiscernible. As $f$ is $\Gamma$-definable, we have the equality:
$$\sigma_n(f^{-1}(Y))=f^{-1}(\sigma_n(Y))$$
Now:
$$\bigcap\limits_{n}\sigma_n(f^{-1}(Y))=\bigcap\limits_{n}f^{-1}(\sigma_n(Y))=f^{-1}\left(\bigcap\limits_{n}\sigma_n(Y)\right)$$
As $Y$ and its $\Gamma$-conjugates are subsets of the image of $f$, such an intersection is empty if and only if $\bigcap\limits_{n}\sigma_n(Y)=\emptyset$. As a result, $(\sigma_n(\delta))_n$ is a witness for division of $Y=\phi(y, \delta)$ over $\Gamma$ if and only it is also a witness for division of $f^{-1}(Y)=\phi(f(x), \delta)$ over $\Gamma$, which concludes the proof.
\end{proof}

\begin{fact}\label{indepDivisionSuitesIndisc}
The following are equivalent:
\begin{itemize}
\item $\tp(u/\Gamma\Delta)$ does not divide over $\Gamma$.
\item For every $\Gamma$-indiscernible sequence $I$ starting at (some enumeration of) $\Delta$, there exists some $J\equiv_{\Gamma\Delta} I$ such that $J$ is $\Gamma u$-indiscernible.
\end{itemize}
\end{fact}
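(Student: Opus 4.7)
The plan is to handle the two directions separately. For the easy direction, that the indiscernibility property implies non-dividing, I argue by contraposition: if some $\phi(x,\Delta)\in\tp(u/\Gamma\Delta)$ divides, witnessed by a $\Gamma$-indiscernible $I=(\Delta_i)_i$ with $\Delta_0=\Delta$ and $\{\phi(x,\Delta_i)\}_i$ $k$-inconsistent, then any $J=(\Delta_i')_i\equiv_{\Gamma\Delta}I$ must satisfy $\Delta_0'=\Delta$ (the formula $x_0=\Delta$ lies in the $\Gamma\Delta$-type of $I$). If moreover $J$ is $\Gamma u$-indiscernible, then $\phi(u,\Delta_0')=\phi(u,\Delta)$ holds, so $\phi(u,\Delta_i')$ holds for every $i$ by indiscernibility, contradicting the $k$-inconsistency of $\{\phi(x,\Delta_i')\}_i$ inherited from $I$ via $\equiv_\Gamma$.

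For the converse, I proceed in three steps. First, given $I=(\Delta_i)_{i<\omega}$ a $\Gamma$-indiscernible starting at $\Delta$, extend it to $(\Delta_i)_{i<\kappa}$ for $\kappa$ large enough to feed a later Erdős-Rado extraction. Second, invoke non-dividing plus compactness: the partial type $\pi(x)$ obtained by substituting each $\Delta_i$ for $\Delta$ in every formula of $\tp(u/\Gamma\Delta)$ is finitely consistent, because each finite conjunction reduces to $\bigwedge_j\phi(x,\Delta_{i_j})$ for some single formula $\phi\in\tp(u/\Gamma\Delta)$, and $\phi$ does not divide. A realization $u'$ of $\pi$ satisfies $(u',\Delta_i)\equiv_\Gamma(u,\Delta)$ for every $i<\kappa$, and in particular $u'\equiv_{\Gamma\Delta}u$. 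Third, apply standard extraction to obtain a $\Gamma u'$-indiscernible sequence $J_0=(\Delta_i')_{i<\omega}$ whose finite subsequences have the same $\Gamma u'$-type as finite subsequences of $(\Delta_i)_{i<\kappa}$.

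It remains to realign the basepoint and the realization, which is the main bookkeeping obstacle. Since all $(u',\Delta_i)$ share one $\Gamma$-type, every $\Delta_i'$ is $\Gamma u'$-conjugate to $\Delta$, so an automorphism over $\Gamma u'$ sending $\Delta_0'$ to $\Delta$ produces $J_1$ that starts at $\Delta$, is still $\Gamma u'$-indiscernible, and has finite subsequences matching finite subsequences of $I$ in $\Gamma u'$-type, hence in $\Gamma$-type. Since both sequences begin with $\Delta$, any formula with parameters in $\Gamma\Delta$ can be rewritten by replacing $\Delta$ with the variable for the first coordinate, reducing $\Gamma\Delta$-equality of types to $\Gamma$-equality; so $J_1\equiv_{\Gamma\Delta}I$. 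Finally, because $u'\equiv_{\Gamma\Delta}u$, an automorphism over $\Gamma\Delta$ sending $u'$ to $u$ maps $J_1$ to the desired $J\equiv_{\Gamma\Delta}I$, now $\Gamma u$-indiscernible. The conceptual core is the step-two compactness argument, which is where non-dividing actually enters.
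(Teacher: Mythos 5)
Your proof is correct, and it is essentially the standard textbook argument (as found, e.g., in Tent--Ziegler, which the paper cites): the paper states this as a Fact without proof, and your two directions --- the contrapositive for indiscernibility implies non-dividing, and the compactness-plus-Erd\H{o}s--Rado-extraction argument followed by automorphism realignment for the converse --- are exactly the expected route. The only place that deserves the care you give it is the realignment bookkeeping (passing from $(u',\Delta_0')\equiv_\Gamma(u',\Delta_i)\equiv_\Gamma(u,\Delta)\equiv_\Gamma(u',\Delta)$ to a $\Gamma u'$-automorphism moving $\Delta_0'$ to $\Delta$, and the observation that two $\Gamma$-indiscernible sequences starting at $\Delta$ with the same $\Gamma$-EM-type are in fact $\equiv_{\Gamma\Delta}$), and you handle it correctly.
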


\begin{definition}
We write $\indep{u}{\Gamma}{\Delta}{\d}$ whenever the above conditions hold.
\end{definition}

\begin{fact}\label{deviationExtGlobale}
The following are equivalent:
\begin{itemize}
\item $\tp(u/\Gamma\Delta)$ does not fork over $\Gamma$.
\item For some/any $|\Gamma|^+$-saturated model $N$ containing $\Gamma\Delta$, there exists in some elementary extension of $N$ some $v\equiv_{\Gamma\Delta} u$ such that $\indep{v}{\Gamma}{N}{\d}$.
\end{itemize}
\end{fact}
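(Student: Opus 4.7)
The plan is to prove the two directions separately; the ``some/any'' disjunction will then come for free, since one direction produces $v$ for every such $N$, and the other only needs one such $N$. The key ingredients I intend to invoke are (a) the extension property for non-forking, i.e.\ that any type over a small set which does not fork over $\Gamma$ extends to a complete type (indeed a global one) which does not fork over $\Gamma$; (b) the elementary fact that if $\phi$ divides over $\Gamma$ then $\phi$ forks over $\Gamma$ (take the disjunction to be $\phi$ itself), so non-forking implies non-dividing for partial types; and (c) saturation of $N$ to move parameters of forking-witnesses inside $N$.

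For the forward direction, fix an arbitrary $|\Gamma|^+$-saturated $N\supset\Gamma\Delta$. Since $\tp(u/\Gamma\Delta)$ does not fork over $\Gamma$, the extension property gives a completion $q\in S(N)$ that still does not fork over $\Gamma$. Take $v$ realising $q$ in an elementary extension of $N$. Then $\tp(v/N)=q$ does not fork over $\Gamma$, hence by (b) does not divide over $\Gamma$, so $\indep{v}{\Gamma}{N}{\d}$ and $v\equiv_{\Gamma\Delta}u$.

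For the converse, suppose we are given $N$ and $v\equiv_{\Gamma\Delta}u$ with $\indep{v}{\Gamma}{N}{\d}$, and assume toward contradiction that $\tp(u/\Gamma\Delta)=\tp(v/\Gamma\Delta)$ forks over $\Gamma$. Then some formula $\phi(x,\delta)$ satisfied by $v$, with $\delta\in\Gamma\Delta$, forks over $\Gamma$: in some elementary extension there are $\epsilon$ and formulas $\psi_i(x,\epsilon)$ each dividing over $\Gamma$ with $\phi(x,\delta)\vdash\bigvee_i\psi_i(x,\epsilon)$. The sole subtlety is that $\epsilon$ might live outside $N$; here is where I use saturation of $N$. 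The statement ``there exist such $\epsilon$ and $\psi_i$'' is expressible (using dividing-witnesses given by indiscernible sequences, whose existence is first-order over $\Gamma$) as a partial type in $\epsilon$ over $\Gamma$, of size at most $|\Gamma|$, so by $|\Gamma|^+$-saturation of $N$ we may realise it inside $N$. Then $v\models\phi(x,\delta)$ forces $v\models\psi_i(x,\epsilon)$ for some $i$, and $\psi_i(x,\epsilon)\in\tp(v/N)$ divides over $\Gamma$, contradicting $\indep{v}{\Gamma}{N}{\d}$.

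The only step with any content is this last parameter-transfer: one must make precise that ``dividing of $\psi_i(x,\epsilon)$ over $\Gamma$'' can be captured by a $\Gamma$-partial type in $\epsilon$ small enough to be realised in $N$. I expect this to be the main (mild) obstacle, but it is standard: witnesses of dividing can be coded by indiscernible sequences, whose existence is itself first-order (and of complexity bounded by $|\Gamma|$) by Ramsey and compactness, so the required saturation of $N$ is exactly $|\Gamma|^+$, matching the hypothesis.
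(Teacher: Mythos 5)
This is stated in the paper as a \emph{Fact} without proof (the reader is referred to \cite{Tent2012ACI} for background on forking and dividing), so there is no internal argument to compare against; what you give is the standard textbook proof, and it is correct. The forward direction (extension property for non-forking, plus ``dividing implies forking'', hence non-forking implies non-dividing) is exactly right, and so is the observation that the ``some/any'' falls out of proving each direction at the appropriate quantifier strength. The one thing I would tighten is the justification for moving the forking witness $\epsilon$ into $N$. You gesture at expressing ``$\psi_i(x,\epsilon)$ divides over $\Gamma$'' as a partial type in $\epsilon$ over $\Gamma$ via first-order coding of indiscernible witnesses; this is in fact true (the set of $\tp(\epsilon/\Gamma)$ for which $\psi_i(x,\epsilon)$ $k$-divides is closed, by Ramsey plus compactness), but the phrasing is loose — existence of an indiscernible sequence is not itself a first-order condition — and it is more machinery than you need. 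It is cleaner to simply realize $\tp(\epsilon/\Gamma\delta)$ inside $N$ by $|\Gamma|^+$-saturation (the base $\Gamma\delta\subset N$ has size $|\Gamma|$ since $\delta$ is a finite tuple): the resulting $\epsilon'\in N$ still satisfies the single formula $\forall x\,(\phi(x,\delta)\to\bigvee_i\psi_i(x,\epsilon'))$, which lies in $\tp(\epsilon/\Gamma\delta)$, and each $\psi_i(x,\epsilon')$ still divides over $\Gamma$ simply because $\epsilon'\equiv_\Gamma\epsilon$ and dividing over $\Gamma$ is invariant under $\Gamma$-automorphisms of the monster. That yields the contradiction with $\indep{v}{\Gamma}{N}{\d}$ without appealing to any type-definability of the dividing condition.
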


\begin{definition}
We write $\indep{u}{\Gamma}{\Delta}{\f}$ whenever the above conditions hold.
\end{definition}

\begin{remark}
If $\Delta$ is a $|\Gamma|^+$-saturated model containing $\Gamma$, then we have:
$$
\indep{u}{\Gamma}{\Delta}{\d}\Longleftrightarrow \indep{u}{\Gamma}{\Delta}{\f}
$$
\end{remark}

\section{Preliminaries and resplendent quantifier elimination}

In this section, we define what are pure short exact sequences of Abelian structures, and we recall results from previous literature.

\begin{definition}\label{PSESDefPP}
Fix $\mathcal{L}$ a first-order language. We define the set of \textit{p.p.-formulas} (for Primitive Positive) as the closure of the set of parameter-free atomic formulas under finite conjunction and existential quantification.
\par Let $A$ be a multi-sorted first-order structure with language $\mathcal{L}$. A \textit{fundamental system} for $A$ is a set $\Phi$ of p.p.-formulas such that, modulo the theory of $A$, every p.p.-formula is equivalent to a finite conjunction of formulas of the form $\phi(t(x))$, with $\phi\in\Phi$, and $t$ a tuple of parameter-free terms.
\par We define $A$ to be an \textit{Abelian structure} when the following conditions hold:
\begin{itemize}
\item On each sort $s$, the structure $A$ expands a structure of Abelian group on $s(A)$ .
\item Any parameter-free term $t(x_{s_1}\ldots x_{s_n})$ (seen as a $\emptyset$-definable function $s_1(A)\times\ldots \times s_n(A)\longrightarrow s(A)$) is a group homomorphism.
\item For every p.p.-formula $\phi(x_{s_1}\ldots x_{s_n})$, the set $\phi(A)$ is a subgroup of $s_1(A)\times\ldots \times s_n(A)$.
\end{itemize}
\par Given Abelian structures $A$, $B$ on the language $\mathcal{L}$, an embedding (for the language $\mathcal{L}$) $\iota:\ A\longrightarrow B$ is said to be \textit{pure} when we have $\iota^{-1}(\phi(B))=\phi(A)$ for every p.p.-formula $\phi$.
\par Let $\Phi$ be a set of p.p.-formulas. A \textit{pure short exact sequence of Abelian structures expanded by $\Phi$} ($\Phi$-PSES) is a first-order structure
\begin{center}
\begin{tikzcd}
    A\arrow[rr, "\iota"]\arrow[dr, "(\pi_\phi)_{\phi\in\Phi}"]&&B\arrow[rr, "\nu"]\arrow[dl, "(\rho_\phi)_{\phi\in\Phi}"]&&C\\
    &\left(\faktor{A}{\phi(A)}\right)_{\phi\in\Phi}&&&
\end{tikzcd}
\end{center}
consisting of:
\begin{itemize}
\item Three Abelian structures $A$, $B$, $C$ on copies of $\mathcal{L}$ such that the respective sets of sorts of $A$, $B$, $C$ are pairwise-disjoint, together with the quotient groups $\faktor{A}{\phi(A)}$ for each $\phi\in\Phi$.
\item A pure embedding $\iota:\ A\longrightarrow B$, and a surjective morphism $\nu:\ B\longrightarrow C$ such that, on each sort of $\mathcal{L}$, we have $\Im(\iota)=\Ker(\nu)$.
\item For each $\phi\in\Phi$, the quotient map $\pi_\phi:\ A\longrightarrow \faktor{A}{\phi(A)}$.
\item For each $\phi\in\Phi$, the map $\rho_\phi:\ B\longrightarrow \faktor{A}{\phi(A)}$, which is zero outside of $\phi(B)+\iota(A)$, and extends the group homomorphism:
$$
\phi(B)+\iota(A)\longrightarrow\faktor{(\phi(B)+\iota(A))}{\phi(B)}\simeq\faktor{A}{\phi(A)}
$$
\end{itemize}
\end{definition}
Note that it follows from the axioms of PSES that we have $$\nu^{-1}(\phi(C))=\phi(B)+\iota(A)$$ for every p.p.-formula $\phi$. Equivalently, $\nu$ is a \textit{pure projection}, that is for every p.p.-formula $\phi$, we have $\phi(C)=\nu(\phi(B))$.
\par For the remainder of this paper, let $M^-$ be a $\Phi$-PSES:
$$
0\longrightarrow A\underset{\iota}{\longrightarrow} B\underset{\nu}{\longrightarrow} C\longrightarrow 0
$$
with $\Phi$ a fundamental system for $B$. Let $\mathcal{L}_B$ be the language of $B$, and let $\mathcal{L}_A$, $\mathcal{L}_C$ be arbitrary disjoint enrichments of the respective languages of $A$ and $C$, such that the sort $\faktor{A}{\phi(A)}$ and the map $\pi_\phi$ are in $\mathcal{L}_A$ for every $\phi\in\Phi$. Let $A^*$ be an expansion of $A$ with language $\mathcal{L}_A$, and let $C^*$ be an expansion of $C$ with language $\mathcal{L}_C$. Note that $A^*$, $C^*$ are completely arbitrary expansions of $A$ and $C$, they are in general not interpretable. Let $M$ be the expansion

$$
0\longrightarrow A^*\underset{\iota}{\longrightarrow} B\underset{\nu}{\longrightarrow} C^*\longrightarrow 0
$$

\noindent of $M^-$ obtained by expanding $A$, $C$ to $A^*$, $C^*$. Our goal is to obtain a characterization for dividing and forking in $M$ relative to dividing and forking in $A^*(M)$ and $C^*(M)$.
\par As mentioned in the introduction, a specific case of this very general context is important to understand (expansions of) valued fields. One can often reduce a problem taking place in a valued field $K$ to a problem on the short exact sequence:
$$
0\longrightarrow k\longrightarrow \RV^*\longrightarrow\Gamma\longrightarrow 0
$$ with $k$ the multiplicative group of the residue field of $K$ expanded to the field structure $k$, $\mathfrak{M}$ the maximal ideal of the valuation ring of $K$, $\RV^*=\faktor{K^*}{1+\mathfrak{M}}$ the group of non-zero leading terms of $K$, and $\Gamma$ the value group of $K$ expanded to its ordered group structure and the value of zero. In that case, the Abelian structures at hand are merely Abelian groups, and the fundamental system that we choose for $B$ is usually the set of formulas of the form $\exists y\ n\cdot y=x$, with $n$ a natural integer (including $0$). The notion of pure embedding that we defined corresponds in this context to the usual notion of pure embeddings of Abelian groups. Reductions to $\RV$ also exist in common expansions of valued fields, such as differential valued fields, ordered valued fields or difference valued fields, and they 
may correspond to richer expansions of $k$ and $\Gamma$.
\par Now we describe known quantifier elimination results from previous literature.
\begin{definition}\label{defACtypes}
\par Let $\Gamma\subset M$ be a substructure. We  see $A^*$, $B$, $C^*$ as (tuples of) sorts, and we recall that $A^*(\Gamma)$, $B(\Gamma)$, $C^*(\Gamma)$ refer to the sets of elements of $\Gamma$ that are in $A^*$, $B$, $C^*$.
\par A $C$\textit{-formula} with parameters in $\Gamma$ is a formula of the form: 
$$\psi_C(\nu(x_B), x_C, \gamma)$$
with $\psi_C\in\mathcal{L}_C$ parameter-free, and $\gamma\in C^*(\Gamma)$. If $u$ is a tuple from $M$, then we write $\tp_C(u/\Gamma)$ for the $C$-type of $u$ over $\Gamma$, that is the partial type of every $C$-formula with parameters in $\Gamma$ satisfied by $u$.
\par An $A$\textit{-formula} with parameters in $\Gamma$ is a formula of the form:
$$
\psi_A(\alpha, x_A, (\rho_{\phi_i}(t_i(x_B)-\beta_i))_i)
$$
with $\psi_A\in\mathcal{L}_A$ parameter-free, $(\phi_i)_i$ a finite family of elements of $\Phi$, $(t_i)_i$ a finite family of parameter-free $\mathcal{L}_B$-terms, $\alpha\in A^*(\Gamma)$, and $\beta_i\in B(\Gamma)$. We define $\tp_A(u/\Gamma)$ similarly.
\end{definition}
Note that, given $t(x_1\ldots x_n, y_1\ldots y_m)$ a parameter-free $\mathcal{L}_B$-term, and given $\beta_1\ldots \beta_m\in B(\Gamma)$, $t(x_1\ldots x_n, \beta_1\ldots \beta_m)$ is equivalent to the term:
$$t(x_1\ldots x_n, 0\ldots 0)-t(0\ldots 0, -\beta_1\ldots -\beta_m)$$
thus it may be written just as in the definition of $A$-formulas, as a term of the form $s(x_1\ldots x_n)-\beta$, with $s$ parameter-free, and $\beta\in B(\Gamma)$.
\par We will be using the following quantifier elimination result:
\begin{theorem}[\cite{suitesExactes}, Corollary 4.20]\label{qe}
If $\Gamma$ is generated by reals (i.e. it is a substructure generated by actual points from $A^*(M)$, $B(M)$ and $C^*(M)$), then we have:
$$\tp(u/\Gamma)=\tp(v/\Gamma)\Longleftrightarrow \left\lbrace
\begin{array}{c}
\tp_A(u/\Gamma)=\tp_A(v/\Gamma)\\
\tp_C(u/\Gamma)=\tp_C(v/\Gamma)\end{array}\right.$$
\end{theorem}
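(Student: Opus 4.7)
The plan is to prove the stronger statement that every $\mathcal{L}$-formula $\theta(x,\gamma)$ with parameters $\gamma\in\Gamma$ is equivalent, modulo the theory of $M$, to a Boolean combination of $A$-formulas and $C$-formulas with parameters in $\Gamma$; the type-equivalence claim then follows from this relative quantifier elimination by compactness. I would proceed by induction on the complexity of $\theta$, with Boolean combinations handled trivially, leaving the content in the atomic case and the existential step.

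For atomic formulas, only the $\mathcal{L}_B$-case requires work, since atomic $\mathcal{L}_A$- and $\mathcal{L}_C$-formulas are, by definition, already $A$- and $C$-formulas. Atomic $\mathcal{L}_B$-formulas over $B(\Gamma)$ are, after normalisation, of the form $t(x_B)=\beta$, and more generally any $\mathcal{L}_B$-p.p.-formula is, by the fundamental-system hypothesis on $\Phi$, a conjunction of formulas $\phi(t(x_B)-\beta)$ with $\phi\in\Phi$. Using the defining properties of $\rho_\phi$ together with the purity identity $\nu(\phi(B))=\phi(C)$, such a formula decomposes into the conjunction of the $C$-formula $\phi(\nu(t(x_B))-\nu(\beta))$, with parameter $\nu(\beta)\in C^*(\Gamma)$, and the $A$-formula $\rho_\phi(t(x_B)-\beta)=0$. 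Negations of atomic formulas are absorbed into the Boolean combinations at the next stage.

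The crucial use of the hypothesis that $\Gamma$ is generated by reals appears in the existential step. Quantifiers over variables in $A^*$ or $C^*$ are absorbed straightforwardly into the respective $A$- or $C$-formulas, since both ambient languages $\mathcal{L}_A$ and $\mathcal{L}_C$ come with their own quantifiers. For a quantifier over $y\in B$, one writes $y=\iota(a)+b_0$, where $b_0$ is a chosen lift of $\nu(y)$; the quantifier then splits into an inner quantifier over $a\in A$ and an outer quantifier over $\nu(y)\in C$. For the resulting formula to remain a Boolean combination of $A$- and $C$-formulas with parameters from $\Gamma$, one needs a representative $b_0\in B(\Gamma)$ for every $\nu$-class occurring among the parameters, which is precisely what being real-generated provides.

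The main obstacle is the $B$-quantifier case, because pushing the quantifier through the $\rho_\phi$ maps requires a uniform analysis of when a prescribed element of $A/\phi(A)$ is of the form $\rho_\phi(b)$ with prescribed $C$-image $\nu(b)$. This is ultimately the pure-projection content of the short exact sequence, that is, the identities $\nu(\phi(B))=\phi(C)$ and $\rho_\phi(\phi(B)+\iota(A))=A/\phi(A)$, and it is what fails as soon as $\Gamma$ is allowed to contain imaginary elements whose $\nu$-class has no real lift in $\Gamma$. The detailed bookkeeping is carried out in \cite{suitesExactes}, Section 4, whose argument I would follow rather than reproduce.
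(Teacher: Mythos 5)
The statement is imported from \cite{suitesExactes}, Corollary 4.20, and the present paper does not reprove it, so there is no internal proof to compare your sketch against.

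Your sketch contains a concrete error in the crucial step. You assert that being real-generated supplies, for every $\nu$-class occurring among the parameters, a representative $b_0\in B(\Gamma)$, and you rely on this to split a $B$-quantifier $\exists y$ via the decomposition $y=\iota(a)+b_0$. Real-generation does not provide such lifts: for instance, the substructure generated by a single element $c\in C^*(M)$ has $B(\Gamma)=\{0\}$, so a nonzero $c\in C^*(\Gamma)$ has no preimage under $\nu$ in $B(\Gamma)$. The existence of lifts in $B(\Gamma)$ of the relevant $\Gamma$-cosets is precisely the \emph{additional} hypothesis imposed in \cref{defPSESModeles} of Section~\ref{sectPSESModeles}, and it is stated there exactly because it is not implied by real-generation; likewise, the partial type $p_0$ in the main results exists to cope with the ``non-lifted torsors'' that arise in the absence of such lifts. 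What the real-generation hypothesis actually does is ensure the parameter set is captured by the $A^*(\Gamma)$-, $B(\Gamma)$- and $C^*(\Gamma)$-coordinates that $A$- and $C$-formulas (\cref{defACtypes}) are allowed to mention, so that no imaginary parameters slip through; it does not give a $\nu$-section over $\Gamma$. The rest of the sketch is sound as far as it goes --- decomposing $\phi(t(x_B)-\beta)$ via purity into the conjunction of the $C$-formula $\phi(\nu(t(x_B)-\beta))$ and the $A$-formula $\rho_\phi(t(x_B)-\beta)=0$, and absorbing $A^*$- and $C^*$-quantifiers into the ambient $\mathcal{L}_A$- and $\mathcal{L}_C$-formulas --- but the $B$-quantifier step, where all the content lies, cannot be discharged by the lifting argument you propose; the actual elimination in \cite{suitesExactes} runs through the finer pp-formula machinery for abelian structures and the purity identities at the level of an embedding test, not by choosing lifts in $\Gamma$.
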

We specify that $\Gamma$ is generated by reals because, in the next sections, we work with imaginary parameter sets.
\par In fact, this result is proved in \cite{suitesExactes} under weaker assumptions: the expansion is allowed to contain functions and relations that involve both $A$ and $C$. However, under these assumptions, $A^*$ and $C^*$ may not be orthogonal, making it unreasonable (and arguably impossible at this level of generality) to try to obtain transfer principles for forking. For the same reason, we do not work in the setting of weakly pure short exact sequences of Abelian structures (see \cite{suitesExactes}, subsubsection 4.5.2), as $\nu\circ\iota$ would be a non-trivial map from $A^*$ to $C^*$. In our setting, $A^*$ and $C^*$ are orthogonal, and they are stably embedded in a strong sense:
\begin{corollary}\label{stablePlongitude}
If $\Gamma$ is generated by reals, then we have $u_Au_C\equiv_\Gamma v_Av_C$ in $M$ if and only if $u_A\equiv_{A^*(\Gamma)} v_A$ in the reduct $A^*(M)$, and $u_C\equiv_{C^*(\Gamma)} v_C$ in $C^*(M)$. In particular, $u_A\equiv_\Gamma v_A$ in $M$ if and only if $u_A\equiv_{A^*(\Gamma)} v_A$ in $A^*(M)$, and likewise for $C$.
\end{corollary}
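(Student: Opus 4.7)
The plan is to deduce this from Theorem \ref{qe} by observing that, when the ambient tuple has empty $B$-component, both the $A$-type and the $C$-type collapse to honest types in the respective reducts. The forward direction is immediate: an $\mathcal{L}_A$-formula with parameters from $A^*(\Gamma)$, and likewise an $\mathcal{L}_C$-formula over $C^*(\Gamma)$, is in particular a formula in the full language over $\Gamma$, so $u_Au_C\equiv_\Gamma v_Av_C$ in $M$ immediately yields both reduct-equivalences.

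For the converse, by \cref{qe} it suffices to check that $\tp_A(u_Au_C/\Gamma)=\tp_A(v_Av_C/\Gamma)$ and $\tp_C(u_Au_C/\Gamma)=\tp_C(v_Av_C/\Gamma)$. I would inspect the defining formulas of $A$-types and $C$-types in \cref{defACtypes}. A $C$-formula has the shape $\psi_C(\nu(x_B), x_C, \gamma)$ with $\gamma\in C^*(\Gamma)$; evaluated on the tuple $u_Au_C$ (which has no $B$-coordinate), the $\nu(x_B)$-slot disappears, and the formula reduces to $\psi_C(u_C, \gamma)$, a pure $\mathcal{L}_C$-formula with parameters in $C^*(\Gamma)$. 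Thus equality of $C$-types over $\Gamma$ in $M$ is equivalent to $u_C\equiv_{C^*(\Gamma)}v_C$ in $C^*(M)$.

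Similarly, an $A$-formula has the shape $\psi_A(\alpha, x_A, (\rho_{\phi_i}(t_i(x_B)-\beta_i))_i)$. On the tuple $u_Au_C$ the parameter-free terms $t_i(x_B)$ vanish (there is no $x_B$), producing $\psi_A(\alpha, u_A, (\rho_{\phi_i}(-\beta_i))_i)$. Now $\Gamma$ is a substructure containing the $\beta_i\in B(\Gamma)$, and $\rho_{\phi_i}$ is a function symbol of the PSES-language landing in the sort $A/\phi_i(A)$, which by our conventions in \cref{PSESDefPP} is a sort of $A^*$; hence $\rho_{\phi_i}(-\beta_i)\in A^*(\Gamma)$. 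The formula is therefore a pure $\mathcal{L}_A$-formula with parameters in $A^*(\Gamma)$, so equality of $A$-types over $\Gamma$ in $M$ amounts to $u_A\equiv_{A^*(\Gamma)}v_A$ in $A^*(M)$. The "in particular" statement follows by specializing $u_C=v_C=\emptyset$ (respectively, $u_A=v_A=\emptyset$).

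There is no real obstacle: the argument is essentially a syntactic reduction riding on \cref{qe}. The only point that requires a moment of care is verifying that the auxiliary parameters $\rho_{\phi_i}(-\beta_i)$ arising from the $A$-formulas actually live in $A^*(\Gamma)$, which uses both that $\Gamma$ is a substructure and that the quotient sorts $A/\phi(A)$ and the maps $\rho_\phi$ were built into $\mathcal{L}_A$.
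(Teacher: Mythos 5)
The paper states this corollary without proof, treating it as an immediate consequence of \cref{qe}; your argument is exactly the intended syntactic reduction and is correct. The one point worth noting is precisely the one you flag: a constant $\mathcal{L}_B$-term evaluates to $0$ (since by \cref{PSESDefPP} every parameter-free term is a homomorphism, hence the nullary ones are trivial), so the $\rho_{\phi_i}$-slots reduce to the fixed elements $\rho_{\phi_i}(-\beta_i)\in A^*(\Gamma)$, and conversely every $\mathcal{L}_A$-formula over $A^*(\Gamma)$ is already an $A$-formula with the empty family of $(\phi_i,t_i)$.
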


\begin{remark}\label{PSESQERque}
\par We should point out that $A$-types and $C$-types are not as independent from each other as one would expect, due to the technical definition of $A$-formulas. For instance, the $A$-formula $\rho_\phi(x)\neq 0$ must imply the $C$-formula $\phi(\nu(x))$. Because of that, one has to be careful when using orthogonality to study forking in this setting. We build in \cref{PSESExemple} a type $\tp(u/\Delta)$ over a larger substructure $\Delta\supset\Gamma$ which divides over $\Gamma$, while $\tp_A(u/\Delta)$ and $\tp_C(u/\Delta)$ do not fork over $\Gamma$. 
\par However, even though naive transfer principles fail for forking and dividing in the full Stone space of types over $\Delta$, they hold in a subspace which only depends on the base parameter set $\Gamma$. More precisely, given $u$ and $\Delta$, we will define $p_0$ a partial type over $\Gamma$ realized by $u$ such that, in the Stone space of types over $\Delta$ containing $p_0$, $A$-types and $C$-types \textit{are} independent from each other. We do manage to classify, in an Ax-Kochen-Ershov fashion, the types in this Stone space which do not fork/divide over $\Gamma$, in terms of the induced $A$-types and $C$-types. This yields in particular a classification of the space of non-forking extensions of $\tp(u/\Gamma)$.
\end{remark}

\section{Orthogonality}
Just as in the previous section, we have $M$ an expansion of a $\Phi$-PSES:
$$
0\longrightarrow A^*\underset{\iota}{\longrightarrow} B\underset{\nu}{\longrightarrow} C^*\longrightarrow 0
$$
with $\Phi$ a fundamental system for $B$, we have substructures $\Gamma\subset\Delta\subset M^{eq}$ (we allow imaginaries this time), and $u=u_Au_Bu_C\in M$ a tuple. We defined $A$-types and $C$-types in the last section, and we noted that they are not completely independent from each other. We get around that problem in this section.
\par As we want to describe forking, we may assume that $M$ is $|\Delta|^+$-saturated and strongly $|\Delta|^+$-homogeneous. Likewise, we may freely assume that $\Gamma=\dcl^{eq}(\Gamma)$, $\Delta=\dcl^{eq}(\Delta)$. For technical reasons, we also make the following two mild hypothesis:
\begin{enumerate}[label=(H\arabic*), ref=(H\arabic*)]
\item\label{GammaAlg} We assume that $\Gamma$ has enough algebraic imaginaries: we assume that for every $\phi\in\Phi$, each coset from $\faktor{C}{\phi(C)}$ which belongs to $\acl^{eq}(\Gamma)$ belongs in fact to $\Gamma$.
\item\label{DeltaReel} We assume that $\Delta$ is generated by reals:
$$\Delta=\dcl^{eq}(A^*(\Delta)\cup B(\Delta)\cup C^*(\Delta))$$
\end{enumerate}
%

\noindent\textbf{Notations} Our goal is to describe the Stone space of extensions in $S(\Delta)$ of $\tp(u/\Gamma)$ which do not fork/divide over $\Gamma$. To achieve this, we define various partial types, Stone spaces and definable functions. The partial types that we define typically state that $\nu(x_B)$ does or does not belong to various cosets of $\faktor{C}{\phi(C)}$. One partial type, $p_0$, depends solely on $\tp(u/\Gamma)$, and plays an important role in our main results. We write with the subscript $_0$ the various objects (such as Stone spaces) which depend on $p_0$ in some way. Other objects will depend on the parameter set $\Delta$, we will typically write them with the subscript/superscript $_\Delta$.

\begin{definition}\label{defTypesPartiels}
Let $\mathcal{T}$ be the set of parameter-free $\mathcal{L}_B$-terms. We define the following sets:
$$
E^+_0=\left\lbrace (\phi, t)\in \Phi\times\mathcal{T}|\nu(t(u_B))+\phi(C)\in \faktor{C}{\phi(C)}(\Gamma)\right\rbrace
$$
$$
E^-_0=(\Phi\times\mathcal{T})\setminus E^+_0
$$
$$
E^+_\Delta=\left\lbrace (\phi, t)\in E^+_0|\exists \beta\in B(\Delta)\ \nu(t(u_B)-\beta)\in\phi(C)\right\rbrace
$$
For each $(\phi, t)\in E^+_\Delta$, choose $\beta_{(\phi, t)}\in B(\Delta)$ a witness of the last equation.
\par By definition of $E^+_0$ and $E^-_0$, the following partial type:
$$p_0(x_A, x_B, x_C)=\left\lbrace \nu(t(x_B-u_B))\in \phi(C))|(\phi, t)\in E^+_0
\right\rbrace
$$
$$\cup\left\lbrace \nu(t(x_B))+\phi(C)\neq X|(\phi, t)\in E^-_0, X\in\faktor{C}{\phi(C)}(\Gamma)
\right\rbrace$$
can be written with formulas over $\Gamma$. We also define $p_\Delta(x_A, x_B, x_C)$ as the following partial type over $\Delta$:
$$
p_0(x_A, x_B, x_C)\cup\left\lbrace \neg\phi(\nu(t(x_B)-\beta))|(\phi, t)\in E^-_0, \beta\in B(\Delta)
\right\rbrace$$
\par Finally, we define the $\emptyset$-definable function:
$$f_C:\ (x_A, x_B, x_C)\longmapsto (\nu(x_B), x_C)$$
and we define the (infinite) tuple of $\Delta$-definable functions:
$$f_A^\Delta:\ (x_A, x_B, x_C)\longmapsto (x_A, (\rho_\phi(t(x_B)-\beta_{(\phi, t)}))_{(\phi, t)\in E^+_\Delta})$$
The graph of $f_A^\Delta$ is $\Delta$-$*$-definable.
\end{definition}
Our goal in this paper is to classify the types over $\Delta$ which contain $p_0$ and do not fork/divide over $\Gamma$. We see $f_C$ and $f_A^\Delta$ as projections to $A^*$ and $C^*$. With respect to those projections, the sets of realisations of $p_0$ and $p_\Delta$ are preimages by $f_C$, i.e. if $u\models p_0$ (resp. $u\models p_\Delta$), and $f_C(u)=f_C(v)$, then it follows from the definitions that $v\models p_0$ (resp. $v\models p_\Delta$).
\par We call $f_A^\Delta$ and $f_C$ projections, but $f_A^\Delta\times f_C$ may not be onto the full direct product, i.e. its direct image may not be a rectangle, as \cref{PSESQERque} points out. The following statements shows that the direct images of $p_0$ and $p_\Delta$ by $f_A^\Delta\times f_C$ are, in fact, rectangles:

\begin{proposition}\label{surjection}
Let $v, v'\in M$ be realizations of $p_0$. Then there exists $w\in M$ such that $f_A^\Delta(v)=f_A^\Delta(w)$, and $f_C(v')=f_C(w)$ (in particular, $w\models p_0$).
\end{proposition}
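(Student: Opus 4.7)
The plan is to preserve the $A^*$-part of $v$ and the $C^*$-part of $v'$, adjusting $v'_B$ along $\iota(A^*)$ to recover the $\rho_\phi$-data contributed by $v$. Concretely I would set $w_A := v_A$, $w_C := v'_C$, and $w_B := v'_B + \iota(a)$ for some $a \in A^*(M)$ to be determined. Then $\nu(w_B) = \nu(v'_B)$ is automatic, so $f_C(w) = f_C(v')$. The remaining condition $f_A^\Delta(w) = f_A^\Delta(v)$ amounts, for each $(\phi, t) \in E^+_\Delta$, to the equation
\[
\rho_\phi(t(w_B) - \beta_{(\phi, t)}) = \rho_\phi(t(v_B) - \beta_{(\phi, t)}).
\]

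I would then unfold this equation. Since $\iota$ is an $\mathcal{L}_B$-embedding, it commutes with $t$, so $t(\iota(a)) = \iota(t(a))$. A short check using $v' \models p_0$ together with the definition of $E^+_\Delta$ shows $t(v'_B) - \beta_{(\phi, t)} \in \phi(B) + \iota(A) = \nu^{-1}(\phi(C))$, and likewise for $v$, so that $\rho_\phi$ is additive on the arguments involved. Combined with the identity $\rho_\phi \circ \iota = \pi_\phi$, the equation rewrites as $\pi_\phi(t(a)) = \rho_\phi(t(b^*))$, where $b^* := v_B - v'_B$, which is equivalent to $t(\iota(a) - b^*) \in \phi(B)$.

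The goal thus reduces to finding $a \in A^*(M)$ such that $\iota(a) - b^* \in Q_{\phi, t}$ for every $(\phi, t) \in E^+_\Delta$, where $Q_{\phi, t} := \{y \in B : t(y) \in \phi(B)\}$ is a $\emptyset$-p.p.-definable subgroup of $B$. By $|\Delta|^+$-saturation of $M$, it suffices to handle any finite subfamily $(\phi_1, t_1), \ldots, (\phi_n, t_n)$: one needs $b^* \in \iota(A) + \tilde\phi(B)$ for the single p.p. formula $\tilde\phi(y) := \bigwedge_i \phi_i(t_i(y))$. Because $\nu$ is a pure projection, $\iota(A) + \tilde\phi(B) = \nu^{-1}(\tilde\phi(C))$, so the condition rewrites as $\nu(t_i(b^*)) \in \phi_i(C)$ for each $i$.

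Finally I would verify this using $v, v' \models p_0$: since each $(\phi_i, t_i) \in E^+_\Delta \subseteq E^+_0$, the partial type $p_0$ forces both $\nu(t_i(v_B - u_B))$ and $\nu(t_i(v'_B - u_B))$ to lie in $\phi_i(C)$, so $\nu(t_i(b^*))$ does too by subtraction. I expect the main obstacle to be the algebraic bookkeeping translating the $\rho_\phi$-equation into a clean p.p. condition on $\iota(a) - b^*$ (checking at each step that the relevant elements live in $\phi(B) + \iota(A)$ so additivity applies); once that is in place, pure projection of $\nu$ converts the question about $B$ into one about $\nu(b^*)$ in $C$, which $p_0$ directly settles.
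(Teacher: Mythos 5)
Your proof is correct and follows essentially the same route as the paper's: both reduce, by saturation, to finding $w_B$ with $\nu(w_B) = \nu(v'_B)$ and $w_B - v_B \in \psi(B)$ for a finite conjunction $\psi(x)=\bigwedge_i \phi_i(t_i(x))$, and then verify the resulting purity condition $\nu(t_i(v_B - v'_B)) \in \phi_i(C)$ directly from $p_0$. The only stylistic difference is that the paper bypasses your explicit unfolding of the $\rho_\phi$-equations (the identity $\rho_\phi\circ\iota=\pi_\phi$ and the membership checks in $\phi(B)+\iota(A)$) by simply observing that the map $x_B\mapsto(\rho_{\phi_i}(t_i(x_B)-\beta_{(\phi_i,t_i)}))_i$ factors through $B/\psi(B)$.
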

\begin{proof}
Let $(\phi_i, t_i)_i$ be a finite family from $E^+_\Delta$. We  find $w\in M$ such that $f_C(w)=f_C(v')$, and $\rho_{\phi_i}(t_i(w_B)-\beta_{(\phi_i, t_i)})=\rho_{\phi_i}(t_i(v_B)-\beta_{(\phi_i, t_i)})$ for every $i$, and the statement will follow by saturation.
\par Let $\psi(x)$ be the following p.p.-formula:
$$
\bigwedge\limits_i\phi_i(t_i(x))
$$
then the map $x_B\longmapsto (\rho_{\phi_i}(t_i(x_B)-\beta_{(\phi_i, t_i)}))_i$ factors through the quotient $\faktor{B}{\psi(B)}$, thus it suffices to find $w_B$ such that $\nu(w_B)=\nu(v'_B)$ (in other words, $w_B-v'_B\in\iota(A)$) and $w_B-v_B\in\psi(B)$, and set $w=(v_A, w_B, v'_C)$. Such a $w_B$ exists if and only if $v_B-v'_B\in\psi(B)+\iota(A)$, if and only if $\nu(v_B-v'_B)\in\psi(C)$, if and only if $\nu(t_i(v_B-v'_B))\in \phi_i(C)$ for each $i$. Now, $v$ and $v'$ both realize $p_0$, and we have as a result $\nu(t_i(u_B-v_B))\in\phi_i(C)\ni\nu(t_i(u_B-v'_B))$, concluding the proof.
\end{proof}
As the set of realizations of $p_\Delta$ is contained in that of $p_0$, and is also a preimage by $f_C$, it immediately follows that:
\begin{corollary}\label{corSurjection}
Proposition \ref{surjection} also holds if we replace $p_0$ by $p_\Delta$.
\end{corollary}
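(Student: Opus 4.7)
The plan is to deduce this directly from Proposition \ref{surjection}, exploiting the observation already highlighted in the text: the set of realizations of $p_\Delta$ is a preimage under $f_C$. So I would start with $v, v' \models p_\Delta$, note that these in particular realize $p_0$, and apply Proposition \ref{surjection} to obtain $w$ with $f_A^\Delta(v) = f_A^\Delta(w)$ and $f_C(v') = f_C(w)$, satisfying $w \models p_0$. The only thing left is to verify that $w$ actually realizes $p_\Delta$.

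For this I would inspect the extra formulas in $p_\Delta$ beyond $p_0$, namely $\neg\phi(\nu(t(x_B) - \beta))$ for $(\phi, t) \in E^-_0$ and $\beta \in B(\Delta)$. Since $\nu$ is a group homomorphism and $\beta \in B$, we can rewrite $\nu(t(x_B) - \beta) = \nu(t(x_B)) - \nu(\beta)$, so whether this formula holds depends only on $\nu(x_B)$, which is the first component of $f_C(x)$. Because $f_C(w) = f_C(v')$, we have $\nu(w_B) = \nu(v'_B)$, and since $v' \models p_\Delta$, these formulas are satisfied by $w$ as well.

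There is no real obstacle here; the corollary is a cosmetic strengthening whose content is entirely contained in the remark that $p_\Delta$ only constrains $(\nu(x_B), x_C)$. The only mild sanity check is to make sure that the subscript $\Delta$ in $f_A^\Delta$ (and the fact that the chosen witnesses $\beta_{(\phi, t)}$ lie in $B(\Delta)$) does not cause any issue, but this is immediate since Proposition \ref{surjection} was already stated for this tuple of functions.
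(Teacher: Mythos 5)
Your proof is correct and follows essentially the same route as the paper: the paper's justification is the one-line observation that $p_\Delta$ extends $p_0$ and is a preimage under $f_C$, and you simply unpack why that preimage property holds (via $\nu$ commuting with $\mathcal{L}_B$-terms). No gap; this is the intended argument.
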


Note that \cref{surjection} is not stated in its most optimal form: it holds for the following weaker partial type:
$$
\left\lbrace \nu(t(x_B-u_B))\in \phi(C))|(\phi, t)\in E^+_0
\right\rbrace
$$

\begin{remark}
Let $S_0^\Delta$ (resp. $S^\Delta$) be the Stone space of every type from $S^{u}(\Delta)$ which extends $p_0$ (resp. $p_\Delta$). Then $(f_A^\Delta, f_C)$ yield continuous maps $S_0^\Delta\longrightarrow f_A^\Delta(S_0^\Delta)\times f_C(S_0^\Delta)$, and $S^\Delta\longrightarrow f_A^\Delta(S^\Delta)\times f_C(S^\Delta)$. By \cref{stablePlongitude} and \ref{DeltaReel}, the factors of those direct products can be identified with closed subspaces of $S(A^*(\Delta))$ and $S(C^*(\Delta))$ in the first-order structures $A^*(M)$ and $C^*(M)$. Then, \cref{surjection} and \cref{corSurjection} state that those two continuous maps are \textit{surjective}.
\end{remark}

\begin{proposition}\label{homeoRectangle}
The map $S^\Delta\longrightarrow f_A^\Delta(S^\Delta)\times f_C(S^\Delta)$ is a homeomorphism.
\end{proposition}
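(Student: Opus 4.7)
The plan is to observe that the map is continuous (induced by the $\Delta$-$*$-definable function $f_A^\Delta$ and the $\emptyset$-definable function $f_C$) and surjective by \cref{corSurjection}. Since $S^\Delta$ is closed in the ambient Stone space hence compact, and the target is Hausdorff (a product of closed subspaces of Stone spaces), a continuous bijection from $S^\Delta$ onto it would automatically be a homeomorphism. Everything therefore reduces to proving \emph{injectivity}: given $u, v$ realizing $p_\Delta$ with $f_A^\Delta(u) \equiv_\Delta f_A^\Delta(v)$ and $f_C(u) \equiv_\Delta f_C(v)$, we must conclude $u \equiv_\Delta v$.

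By hypothesis \ref{DeltaReel}, $\Delta$ is generated by reals, so \cref{qe} applies and it is enough to prove the two equalities $\tp_A(u/\Delta) = \tp_A(v/\Delta)$ and $\tp_C(u/\Delta) = \tp_C(v/\Delta)$. The $C$-type equality is immediate, since every $C$-formula $\psi_C(\nu(x_B), x_C, \gamma)$ is by definition a formula in $f_C(x)$ with parameters in $C^*(\Delta)$, and \cref{stablePlongitude} identifies $\equiv_\Delta$ with $\equiv_{C^*(\Delta)}$ on tuples from $C^*$. For the $A$-type, I would pick an arbitrary $A$-formula $\psi_A(\alpha, x_A, (\rho_{\phi_i}(t_i(x_B) - \beta_i))_i)$ and show that, modulo $p_\Delta$, each entry $\rho_{\phi_i}(t_i(x_B) - \beta_i)$ rewrites to a $\Delta$-definable expression in $f_A^\Delta(x)$, after a case-split that is the same for $u$ and $v$.

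The case analysis splits on the position of $(\phi_i, t_i)$. If $(\phi_i, t_i) \in E^-_0$, then $p_\Delta$ directly forces $\nu(t_i(x_B) - \beta_i) \notin \phi_i(C)$, so $\rho_{\phi_i}(t_i(x_B) - \beta_i) = 0$. If $(\phi_i, t_i) \in E^+_0 \setminus E^+_\Delta$, then $p_0$ fixes the coset $\nu(t_i(x_B)) + \phi_i(C) = \nu(t_i(u_B)) + \phi_i(C)$, which by definition of $E^+_\Delta$ avoids $\nu(B(\Delta)) + \phi_i(C)$, so again $\rho_{\phi_i}(t_i(x_B) - \beta_i) = 0$. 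If $(\phi_i, t_i) \in E^+_\Delta$, then $p_0$ guarantees $t_i(x_B) - \beta_{(\phi_i, t_i)} \in \phi_i(B) + \iota(A)$, and whenever the $C$-condition $\nu(t_i(x_B) - \beta_i) \in \phi_i(C)$ also holds, additivity of $\rho_{\phi_i}$ on the subgroup $\phi_i(B) + \iota(A)$ yields
\[
\rho_{\phi_i}(t_i(x_B) - \beta_i) = \rho_{\phi_i}(t_i(x_B) - \beta_{(\phi_i, t_i)}) + \rho_{\phi_i}(\beta_{(\phi_i, t_i)} - \beta_i),
\]
where the last summand is a $\Delta$-constant in $A^*$ and the first is a component of $f_A^\Delta(x)$; otherwise the value is $0$. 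Since $f_C(u) \equiv_\Delta f_C(v)$ and $p_0$ aligns the cosets $\nu(t_i(x_B)) + \phi_i(C)$, $u$ and $v$ always land in the same case, and the rewritten formula reduces to an $\mathcal{L}_A$-formula in $A^*(\Delta)$-parameters applied to $f_A^\Delta(x)$. Invoking $f_A^\Delta(u) \equiv_\Delta f_A^\Delta(v)$, which by \cref{stablePlongitude} is the same as $f_A^\Delta(u) \equiv_{A^*(\Delta)} f_A^\Delta(v)$, finishes the argument. The main obstacle is precisely this case analysis: one must use $p_\Delta$ to collapse arbitrary $\Delta$-parameters $\beta_i$ onto the fixed witnesses $\beta_{(\phi_i, t_i)}$ built into $f_A^\Delta$, while keeping track that $\rho_\phi$ is only a homomorphism on the subgroup $\phi(B) + \iota(A)$ and so a nontrivial case split on a $C$-formula is unavoidable.
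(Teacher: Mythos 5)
Your proposal is correct and follows essentially the same route as the paper: reduce to injectivity by compactness/Hausdorff, invoke \ref{DeltaReel} and Theorem~\ref{qe} to split the type into $A$- and $C$-components, dispose of the $C$-component directly via $f_C$, and handle the $A$-component by the same four-way case analysis on $(\phi,t)$ using $p_\Delta$ and the additivity of $\rho_\phi$ on $\phi(B)+\iota(A)$. The only cosmetic difference is that the paper first uses strong homogeneity to reduce to literal equalities $f_A^\Delta(v)=f_A^\Delta(w)$, $f_C(v)=f_C(w)$ (so the case split on $\beta-\beta_{(\phi,t)}\in\phi(B)+\iota(A)$ is deterministic), whereas you keep $\equiv_\Delta$ and note that $p_0$ together with the shared $C$-type forces $u,v$ into the same branch — these are equivalent given $p_0$ and purity of $\nu$.
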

\begin{proof}
By compactness of the domain, separation of the image, and surjectivity, we just need to show injectivity. Let $v$, $w$ be realizations of $p_\Delta$ such that $f_A^\Delta(v)\equiv_{A^*(\Delta)}f_A^\Delta(w)$ and $f_C(v)\equiv_{C^*(\Delta)}f_C(w)$. We need to show that $v\equiv_\Delta w$. By \cref{stablePlongitude} and \ref{DeltaReel}, we have $f_A^\Delta(v)f_C(v)\equiv_\Delta f_A^\Delta(w)f_C(w)$. As a result, by strong homogeneity, we can freely assume that $f_A^\Delta(v)=f_A^\Delta(w)$ and $f_C(v)=f_C(w)$.
\par Now let us show $v\equiv_\Delta w$. Let $\Delta_r=A^*(\Delta)\cup B(\Delta)\cup C^*(\Delta)$. By \ref{DeltaReel}, it suffices to show that $v\equiv_{\Delta_r} w$. By \cref{qe}, it suffices to show that $\tp_A(v/\Delta_r)=\tp_A(w/\Delta_r)$ and $\tp_C(v/\Delta_r)=\tp_C(w/\Delta_r)$.
\par The fact that $\tp_C(v/\Delta_r)=\tp_C(w/\Delta_r)$ immediately follows from the fact that $f_C(v)=f_C(w)$.
\par Let us show $\tp_A(v/\Delta_r)=\tp_A(w/\Delta_r)$. Let $t$ in $\mathcal{T}$, $\phi$ in $ \Phi$, and $\beta$ in $ B(\Delta_r)=B(\Delta)$. It suffices to show that $\rho_\phi(t(v_B)-\beta)=\rho_\phi(t(w_B)-\beta)$.
\begin{itemize}
\item Suppose $(\phi, t)\in E^+_\Delta$, and $\beta-\beta_{(\phi, t)}\in \phi(B)+\iota(A)$. Then:
$$
\begin{array}{ccl}
\rho_\phi(t(v_B)-\beta) &= &\rho_\phi(t(v_B)-\beta_{(\phi, t)})+\rho_\phi(\beta_{(\phi, t)}-\beta)\\
 &= &\rho_\phi(t(w_B)-\beta_{(\phi, t)})+\rho_\phi(\beta_{(\phi, t)}-\beta)\\
 &=&\rho_\phi(t(w_B)-\beta)
\end{array}
$$
\par The second equality follows from the fact that $f_A^\Delta(v)=f_A^\Delta(w)$, the other equalities follow from the definition of $\rho_\phi$.
\item Suppose $(\phi, t)\in E^+_\Delta$, and $\beta-\beta_{(\phi, t)}\not\in \phi(B)+\iota(A)$. Then:
$$t(v_B)-\beta\not\in\phi(B)+\iota(A)\not\ni t(w_B)-\beta$$
thus $\rho_\phi(t(v_B)-\beta)=0=\rho_\phi(t(w_B)-\beta)$.
\item Suppose $(\phi, t)\in E^+_0\setminus E^+_\Delta$. By definition of $p_0$, we have:
$$t(v_B-u_B)\in \phi(B)+\iota(A)\ni t(w_B-u_B)$$ however, by definition of $E^+_\Delta$, we have $t(u_B)-\beta\not\in\phi(B)+\iota(A)$, thus we conclude just as in the above item that $\rho_\phi(t(v_B)-\beta)=0=\rho_\phi(t(w_B)-\beta)$.
\item Suppose $(\phi, t)\in E^-_0$. Then, by definition of $p_\Delta$, we have:
$$t(v_B)-\beta\not\in\phi(B)+\iota(A)\not\ni t(w_B)-\beta$$
just as in the above item.
\end{itemize}
This concludes the proof, since all cases were considered.
\end{proof}

\begin{remark}\label{orthACDelta}
Let $v$, $w$ be realizations of $p_\Delta$. By definition, $\tp_C(v/\Delta)=\tp_C(w/\Delta)$ if and only if $f_C(v)\equiv_\Delta f_C(w)$, and the above proof shows us that the (non-trivial) same equivalence holds for $A$: $\tp_A(v/\Delta)=\tp_A(w/\Delta)$ if and only if $f_A^\Delta(v)\equiv_\Delta f_A^\Delta(w)$. So we have a homeomorphism between the space of $A$-types of realizations of $p_\Delta$, and $f_A^\Delta(S^\Delta)$. In particular, $S^\Delta$ is homeomorphic to the direct product of the respective spaces of $A$-types and $C$-types of realizations of $p_\Delta$. 
%
%
%
%

\end{remark}

\section{Dividing}\label{PSESSectDiv}
We defined in the previous section a space of types $S_0^\Delta$ which is closed in $S(\Delta)$. We  classify here the subspace of $S_0^\Delta$ of types which do not divide over $\Gamma$ in terms of the projection maps $f_A^\Delta$, $f_C$. With respect to those maps, we saw that $S_0^\Delta$ maps onto a rectangle, while $S^\Delta$ \textit{is} a rectangle. Thus the elements of $S^\Delta$ are in some sense easier to classify than in $S_0^\Delta$. Thankfully, the space that we are trying to describe is a subspace of $S^\Delta$:

\begin{proposition}\label{divisionPDelta}
Every type in $S_0^\Delta\setminus S^\Delta$ divides over $\Gamma$.
\end{proposition}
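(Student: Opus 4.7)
The plan is to show that any $q \in S_0^\Delta \setminus S^\Delta$ implies a formula that divides over $\Gamma$, by reducing the question to dividing of a singleton coset in the imaginary sort $C/\phi(C)$. Unpacking the definition of $p_\Delta$, if $q$ extends $p_0$ but is not in $S^\Delta$, then there must exist $(\phi, t)\in E^-_0$ and $\beta\in B(\Delta)$ such that $q$ contains the formula $\phi(\nu(t(x_B)-\beta))$. Rewriting, this formula says that the coset $\nu(t(x_B))+\phi(C)$ equals the specific coset $X_\beta:=\nu(\beta)+\phi(C)\in (C/\phi(C))(\Delta)$.

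Next, I would argue that $X_\beta$ is non-algebraic over $\Gamma$, which is where hypothesis \ref{GammaAlg} enters. Since $q\supseteq p_0$ and $(\phi,t)\in E^-_0$, the type $p_0$ contains $\nu(t(x_B))+\phi(C)\neq X$ for every $X\in (C/\phi(C))(\Gamma)$, so any realization of $q$ forces $X_\beta \notin (C/\phi(C))(\Gamma)$. As $\Gamma=\dcl^{eq}(\Gamma)$, this means $X_\beta\notin\Gamma$, and by the contrapositive of \ref{GammaAlg}, $X_\beta\notin\acl^{eq}(\Gamma)$. Therefore $X_\beta$ has infinitely many $\Gamma$-conjugates, and by a standard Ramsey/compactness argument one extracts an infinite $\Gamma$-indiscernible sequence of pairwise distinct conjugates starting at $X_\beta$. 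This sequence witnesses that the singleton formula $y=X_\beta$ divides over $\Gamma$ in the sort $C/\phi(C)$.

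To conclude, I would apply \cref{divisionPreimage} to the $\emptyset$-definable function $f\colon x_B\longmapsto \nu(t(x_B))+\phi(C)$ from the appropriate power of $B$ into $C/\phi(C)$. The preimage $f^{-1}(\{X_\beta\})$ is exactly the set defined by $\phi(\nu(t(x_B)-\beta))$, and $X_\beta$ lies in the image of $f$ by the consistency of $q$. Hence \cref{divisionPreimage} transfers dividing of $\{X_\beta\}$ to dividing of this preimage. So $q$ implies a formula dividing over $\Gamma$, and therefore $q$ itself divides over $\Gamma$.

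The one step requiring genuine content is identifying the correct imaginary witness: the obstruction to lying in $S^\Delta$ lives at the level of cosets in $C/\phi(C)$, and hypothesis \ref{GammaAlg} is designed precisely to rule out an intermediate case where such a coset would be algebraic over $\Gamma$ without being $\Gamma$-definable. Once this is noticed, the remainder is a straightforward application of \cref{divisionPreimage} together with the indiscernible sequence produced by the non-algebraicity of $X_\beta$.
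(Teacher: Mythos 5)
Your proof is correct and follows essentially the same route as the paper's: extract $(\phi,t)\in E^-_0$ and $\beta\in B(\Delta)$ so that $q$ forces $\nu(t(x_B))+\phi(C)$ to equal the coset $\nu(\beta)+\phi(C)$, use \ref{GammaAlg} to promote $\notin\Gamma$ to $\notin\acl^{eq}(\Gamma)$, and then derive dividing from infinitely many pairwise-distinct $\Gamma$-conjugates of that coset. The only cosmetic difference is that you route the last step through \cref{divisionPreimage} applied to $x_B\mapsto\nu(t(x_B))+\phi(C)$, whereas the paper observes directly that the preimages $\{\nu(t(x_B))+\phi(C)=Y_n\}$ are pairwise-disjoint $\Gamma$-conjugates; both package the same observation.
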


\begin{proof}
Let $p$ be such a type. By definition, there must exist $\beta\in B(\Delta)$, and $(\phi, t)\in E^-_0$ such that $p(x)\models \phi(\nu(t(x_B)-\beta))$. Let us show that this formula divides over $\Gamma$. Let $Y$ be the coset $\nu(\beta)+\phi(C)$. Then we have $p(x)\models\nu((t(x_B)))+\phi(C)=Y$. As $p$ extends $p_0$, we have $Y\not\in \Gamma$. By \ref{GammaAlg}, we have $Y\not\in\acl^{eq}(\Gamma)$. Let $(Y_n)_{n<\omega}$ be a sequence of pairwise-distinct $\Gamma$-conjugates of $Y$. Then the definable sets $(\nu(t(x_B))+\phi(C)=Y_n)_n$ are $\Gamma$-conjugates and pairwise-disjoint. This implies that they divide over $\Gamma$, which concludes the proof.
\end{proof}

\begin{corollary}\label{orth2ACDelta}
    The space of $A$-types (resp. $C$-types) over $\Delta$ of realizations $u$ of $p_0$ such that $\indep{u}{\Gamma}{\Delta}{\d}$ is naturally homeomorphic to the direct image by $f_A^\Delta$ (resp. $f_C$) of the closed subspace of $S_0^{\Delta}$ of extensions of $p_0$ to $\Delta$ which do not divide over $\Gamma$. The same holds if we replace dividing by forking.
\end{corollary}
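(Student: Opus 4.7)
The plan is to deduce this corollary from the previous results with essentially no new model-theoretic content: the key observation is that Proposition~\ref{divisionPDelta} forces the non-dividing (and hence non-forking) parts of $S_0^\Delta$ to lie inside the ``rectangular'' subspace $S^\Delta$, where the projection $f_A^\Delta$ already implements the map to $A$-types.

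First, since forking implies dividing, any non-forking extension of $p_0$ to $\Delta$ is in particular non-dividing; and by Proposition~\ref{divisionPDelta}, any such type lies in $S^\Delta$. So both the non-dividing and non-forking parts of $S_0^\Delta$ are in fact closed subspaces of $S^\Delta$.

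Next, I would combine Proposition~\ref{homeoRectangle} with Remark~\ref{orthACDelta}: for realizations $v, w$ of $p_\Delta$, we have $\tp_A(v/\Delta) = \tp_A(w/\Delta)$ iff $f_A^\Delta(v) \equiv_\Delta f_A^\Delta(w)$, and by stable embeddedness (\cref{stablePlongitude}) under \ref{DeltaReel}, this is equivalent to $f_A^\Delta(v)$ and $f_A^\Delta(w)$ having the same type over $A^*(\Delta)$ in $A^*(M)$. This provides a natural continuous bijection between the space of $A$-types of realizations of $p_\Delta$ and $f_A^\Delta(S^\Delta) \subseteq S(A^*(\Delta))$. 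Restricting to the closed non-dividing or non-forking subspaces then produces the desired homeomorphisms onto $f_A^\Delta(S_0^{\Delta, \d})$ and $f_A^\Delta(S_0^{\Delta, \f})$ respectively. The identical argument with $f_C$ in place of $f_A^\Delta$ settles the $C$-type case.

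The main point requiring care is upgrading the bijection above to a genuine homeomorphism (rather than just a continuous bijection). This is where compactness enters: $S^\Delta$ is compact (closed in a Stone space) and $S(A^*(\Delta))$ is Hausdorff, so any continuous bijection out of a quotient of $S^\Delta$ into $S(A^*(\Delta))$ is automatically a homeomorphism. Once that is in place, passing to the closed subspaces of non-dividing/non-forking types finishes the proof.
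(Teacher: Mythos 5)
Your argument follows the same route as the paper's one-line proof: \cref{divisionPDelta} forces the non-dividing (hence also non-forking) part of $S_0^\Delta$ into $S^\Delta$, and \cref{orthACDelta} gives the homeomorphism between $A$-types of realizations of $p_\Delta$ and $f_A^\Delta(S^\Delta)$, which then restricts to the closed subspaces in question; the compactness-plus-Hausdorff argument you invoke to upgrade the bijection is exactly what underlies \cref{homeoRectangle}, so there is nothing new needed. One small slip: you write ``forking implies dividing'' where you mean ``dividing implies forking'' — the correct implication is what yields that non-forking types are non-dividing and hence lie in $S^\Delta$, and your conclusion is right even though the stated direction is reversed.
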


\begin{proof}
    As the spaces of types at hand are included in $S^\Delta$, we apply \cref{orthACDelta}.
\end{proof}

Now, one wonders what does the space of types which do not divide over $\Gamma$ look like. It turns out that it is a \textit{subrectangle} of $S^\Delta$:

\begin{theorem}\label{thmDivision}
Let $v, v'\in M$ be realizations of $p_0$ such that $\indep{v}{\Gamma}{\Delta}{\d}$ and $\indep{v'}{\Gamma}{\Delta}{\d}$. Let $w\in M$ be such that $f_A^\Delta(w)=f_A^\Delta(v)$, $f_C(w)=f_C(v')$. Then we have $\indep{w}{\Gamma}{\Delta}{\d}$.
\end{theorem}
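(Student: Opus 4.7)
The plan is to verify the indiscernible-sequence characterization of non-dividing from \cref{indepDivisionSuitesIndisc}. Since $v, v' \models p_0$ are $\d$-independent from $\Delta$ over $\Gamma$, \cref{divisionPDelta} forces $v, v' \models p_\Delta$; as $p_\Delta$ depends only on the $f_C$-projection, the equality $f_C(w) = f_C(v')$ yields $w \models p_\Delta$ too. Fix an arbitrary $\Gamma$-indiscernible sequence $I = (\delta_i)_{i<\omega}$ with $\delta_0$ an enumeration of the real generators of $\Delta$ (using \ref{DeltaReel}); the goal is to produce $J \equiv_{\Gamma\Delta} I$ which is $\Gamma w$-indiscernible.

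By $\d$-independence of $v$, find $v_1 \equiv_{\Gamma\Delta} v$ with $I$ being $\Gamma v_1$-indiscernible, and similarly $v_1' \equiv_{\Gamma\Delta} v'$ with $I$ being $\Gamma v_1'$-indiscernible. Apply \cref{surjection} to $v_1, v_1' \models p_0$ to obtain $w_1 \models p_\Delta$ with $f_A^\Delta(w_1) = f_A^\Delta(v_1)$ and $f_C(w_1) = f_C(v_1')$. Since $f_A^\Delta$ and $f_C$ are $\Delta$-definable and $v_1 \equiv_\Delta v$, $v_1' \equiv_\Delta v'$, the $A$- and $C$-projections of $w_1$ and $w$ have the same types over $\Delta$, so \cref{homeoRectangle} gives $w_1 \equiv_\Delta w$. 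By strong homogeneity, pick $\tau \in \Aut(M/\Delta)$ with $\tau(w_1) = w$ and set $J = \tau(I)$: then $J \equiv_{\Gamma\Delta} I$, and the problem reduces to showing that $I$ itself is $\Gamma w_1$-indiscernible.

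For this final step, take any increasing index tuples $i_* = i_1 < \cdots < i_n$ and $j_* = j_1 < \cdots < j_n$; it suffices to prove $w_1 \delta_{i_*} \equiv_\Gamma w_1 \delta_{j_*}$. The strategy is an orthogonality argument: the type of $\delta_{i_*}$ over $\Gamma w_1$ should decompose, in the spirit of \cref{qe} and \cref{stablePlongitude}, into an $A$-part controlled by $v_1$ (through $w_{1,A} = v_{1,A}$ and the $\rho_\phi$-identities from the proof of \cref{homeoRectangle}, available because $w_1, v_1 \models p_\Delta$) and a $C$-part controlled by $v_1'$ (through $w_{1,C} = v_{1,C}'$ and $\nu(w_{1,B}) = \nu(v_{1,B}')$). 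Then $\Gamma v_1$-indiscernibility handles the $A$-part, and $\Gamma v_1'$-indiscernibility the $C$-part, yielding the desired equality of types.

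The main obstacle will be making this orthogonal decomposition rigorous despite two complications: first, $\Gamma$ is imaginary, so \cref{qe} does not apply directly over it and one must use hypothesis \ref{GammaAlg} to control the relevant cosets of $\faktor{C}{\phi(C)}$; second, for $i > 0$ the parameters carried by $\delta_{i_*}$ lie outside $\Delta$, so the fixed maps $f_A^\Delta$ and $f_C$ do not directly control all $A$- and $C$-formulas over $\Gamma w_1 \cup \{\delta_{i_*}\}$. The technical core is to rewrite any such $A$-formula as a Boolean combination whose truth at $w_1$ is computable from $f_A^\Delta(w_1)$ together with $\tp(\delta_{i_*}/\Gamma v_1)$, using additivity of the $\rho_\phi$'s and the p.p.-structural identities of a $\Phi$-PSES, and symmetrically for $C$-formulas via $v_1'$.
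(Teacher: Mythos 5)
Your setup is sound and parallels the paper's: you pass the indiscernible sequence $I$ through $v_1,v_1'$ to a tuple $w_1$ via \cref{surjection}, show $w_1\equiv_\Delta w$ using \cref{homeoRectangle} and the $\Delta$-definability of $f_A^\Delta,f_C$, and reduce via a $\Delta$-automorphism to proving that $I$ is $\Gamma w_1$-indiscernible. All of that is correct. The problem is that the last step --- which you yourself flag as ``the technical core'' --- is precisely where the mathematical content of the theorem lies, and you stop at the level of a plan rather than carrying it out. As written this is a genuine gap, not a stylistic omission.

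Moreover, the route you propose is strictly harder than what the paper does, in two ways. First, you aim for \emph{full} $\Gamma w_1$-indiscernibility of $I$, i.e.\ agreement of $\tp(w_1\delta_{i_*}/\Gamma)$ and $\tp(w_1\delta_{j_*}/\Gamma)$ for all index tuples, which a priori requires controlling $\rho_\phi(t(w_{1,B},\delta_{i_*,B})-\beta)$ for \emph{every} $(\phi,t)\in\Phi\times\mathcal{T}$ and every $\beta\in B(\Gamma)$, including pairs in $E^+_0\setminus E^+_\Delta$ and in $E^-_0$ (these need $p_\Delta$ and the $\Gamma v_1'$-indiscernibility to be dispatched, and the case split is more elaborate than the one in \cref{homeoRectangle}). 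The paper avoids this entirely: it shows non-dividing of $\tp(w/\Delta)$ \emph{formula by formula}. Given a $\Delta$-formula $\phi(x)\in\tp(w/\Delta)$, it uses \cref{homeoRectangle} plus compactness to produce a stronger formula $\psi(x)\wedge\phi_A(f_A^\Delta(x))\wedge\phi_C(f_C(x))$ with $\psi$ a $\Gamma$-formula implied by $p_0$; it then only has to check that the analogue $\bar w$ of your $w_1$ realizes $\theta_A(x,\delta_n)\wedge\theta_C(x,\delta_n)$ for all $n$, which touches only the \emph{finitely many} $(\phi,t)\in E^+_\Delta$ that actually occur in $\phi_A\circ f_A^\Delta$. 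Second, you correctly notice that \cref{qe} does not apply directly over the imaginary base $\Gamma$, but do not explain how to get around it. The paper's formula-level argument does: it conjugates by a $\Gamma$-automorphism $\sigma$ with $\sigma(\delta)=\delta_N$, so the imaginary dependence is absorbed into the $\Gamma$-invariant cosets of $\faktor{C}{\phi(C)}$, and the remaining nontrivial computation is the identity $\sigma(f_A^\Delta)(\bar v)=\sigma(f_A^\Delta)(\bar w)$, established via additivity of $\rho_\theta$ together with the fact that $\beta_{(\theta,t)}-\sigma(\beta_{(\theta,t)})\in\theta(B)+\iota(A)$ (this uses that $(\theta,t)\in E^+_\Delta\subset E^+_0$, so the relevant coset lies in $\Gamma$ and is fixed by $\sigma$). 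This is exactly the step your proposal gestures at --- ``additivity of the $\rho_\phi$'s and the p.p.-structural identities'' --- without performing it.
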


\begin{proof}
Let $\phi(x)$ be a formula with parameters in $\Delta$ which is realized by $w$. By \cref{homeoRectangle}, \cref{stablePlongitude}, and the fact that $p_0$ and $p_\Delta$ are preimages by $f_C$, the partial type: $$p_0\cup
\left\lbrace f_A^\Delta(x)\models\tp(f_A^\Delta(w)/A^*(\Delta))\right\rbrace
\cup \left\lbrace f_C(x)\models\tp(f_C(w)/C^*(\Delta))\right\rbrace$$
generates the complete type $\tp(w/\Delta)$. By compactness, there must exist:
\begin{itemize}
\item A formula $\psi(x)$ with parameters in $\Gamma$ satisfied by all the realizations of $p_0$.
\item A formula $\phi_A(y)$ in $\mathcal{L}_A$, and with parameters in $A^*(\Delta)$, satisfied by $f_A^\Delta(w)$.
\item A formula $\phi_C(z)$ in $\mathcal{L}_C$, and with parameters in $C^*(\Delta)$, satisfied by $f_C(w)$.
\end{itemize}
such that:
$$M^{eq}\models\forall x\ \left[\left(
\psi(x)\wedge\phi_A\left(f_A^\Delta(x)\right)\wedge\phi_C(f_C(x))
\right)
\Longrightarrow \phi(x)
\right]$$
\par Note that only finitely many of the components of $f_A^\Delta$ appear in the formula $\phi_A(f_A^\Delta(x))$. Let $\delta$ be a finite tuple from $\Delta$ such that the formula $\phi_A\left(f_A^\Delta(x)\right)\wedge\phi_C(f_C(x))$ is $\delta$-definable, say, $\phi_A\left(f_A^\Delta(x)\right)=\theta_A(x, \delta)$ and $\phi_C(f_C(x))=\theta_C(x, \delta)$, with $\theta_A(x, y)$ and $\theta_C(x, z)$ parameter-free formulas. It suffices to show that the formula $\psi(x)\wedge\theta_A(x, \delta)\wedge\theta_C(x, \delta)$ does not divide over $\Gamma$.
\par Let $I=(\delta_n)_{n<\omega}$ be a $\Gamma$-indiscernible sequence containing $\delta$. As $\indep{v}{\Gamma}{\Delta}{\d}$, there must exist by \cref{indepDivisionSuitesIndisc} some $\overline{v}\equiv_\Delta v$ such that $I$ is $\Gamma\overline{v}$-indiscernible. Likewise, let $\overline{v'}\equiv_\Delta v'$ be such that $I$ is $\Gamma\overline{v'}$-indiscernible. Then $\overline{v}$ and $\overline{v'}$ are still realizations of $p_0$, thus there exists, by \cref{surjection} a tuple $\overline{w}\in M$ such that $f_A^\Delta(\overline{w})=f_A^\Delta(\overline{v})$ and $f_C(\overline{w})=f_C(\overline{v'})$ (in particular, $\overline{w}\models p_0$, thus $M^{eq}\models\psi(\overline{w})$). Let us show that $M\models\theta_A(\overline{w}, \delta_n)\wedge\theta_C(\overline{w}, \delta_n)$ for all $n<\omega$, which will conclude the proof. Choose $N<\omega$.
\par As $\overline{v'}\equiv_\Delta v'$, we have $M\models\theta_C(\overline{v'}, \delta)$. As $(\delta_n)_n$ is a $\Gamma\overline{v'}$-indiscernible sequence containing $\delta$ and $\delta_N$, we have $M\models \theta_C(\overline{v'}, \delta_N)$. As $f_C(\overline{w})=f_C(\overline{v'})$, we have $M\models \theta_C(\overline{w}, \delta_N)$.
\par Likewise, we have $M\models \theta_A(\overline{v}, \delta_N)$, but there is a subtlety to get around if we want to prove that $M\models\theta_A(\overline{w}, \delta_N)$: since $f_A^\Delta$ depends on $\Delta$, when replacing $\delta$ with its $\Gamma$-conjugate $\delta_N$, we also replace $f_A^\Delta$ with a $\Gamma$-conjugate\ldots  By strong homogeneity, let $\sigma\in\Aut(M^{eq}/\Gamma)$ be such that $\sigma(\delta)=\delta_N$. We have to show that $\sigma\left(f_A^\Delta\right)(\overline{v})=\sigma\left(f_A^\Delta\right)(\overline{w})$.
\par Since the definition of $f_A^\Delta$ is rather technical, we recall the various definitions:
\begin{itemize}
    \item $E^+_0=\left\lbrace (\phi, t)\in \Phi\times\mathcal{T}|\nu(t(u_B))+\phi(C)\in \faktor{C}{\phi(C)}(\Gamma)\right\rbrace$
    \item $E^+_\Delta=\left\lbrace (\phi, t)\in E^+_0|\exists \beta\in B(\Delta)\ \nu(t(u_B)-\beta)\in\phi(C)\right\rbrace$
    \item For each $(\phi, t)\in E^+_\Delta$, $\beta_{(\phi, t)}\in B(\Delta)$ is such that $\nu(t(u_B)-\beta_{(\phi, t)})$ lies in $\phi(C)$.
    \item $f_A^\Delta:\ (x_A, x_B, x_C)= (x_A, (\rho_\phi(t(x_B)-\beta_{(\phi, t)}))_{(\phi, t)\in E^+_\Delta})$.
\end{itemize}

\par Choose $(\theta, t)\in E^+_\Delta$, and let us show that:
$$\rho_\theta(t(v_B)-\sigma(\beta_{(\theta, t)}))=\rho_\theta(t(w_B)-\sigma(\beta_{(\theta, t)}))$$
\par By definition of $E^+_\Delta$ and $E^+_0$, the coset $\nu(\beta_{(\theta, t)})+\theta(C)$ belongs to $\Gamma$, thus $\nu(\sigma(\beta_{(\theta, t)}))$ belongs to that coset as well. It follows that the difference $\beta_{(\theta, t)}-\sigma(\beta_{(\theta, t)})$ is in $\theta(B)+\iota(A)$. As $\overline{v}$ and $\overline{w}$ both realize $p_0$, their image by $\nu$ belongs to that coset as well, and we conclude that:
$$
\begin{array}{ccl}
\rho_\theta(\overline{v}_B-\sigma(\beta_{(\theta, t)}))&=&\rho_\theta(\overline{v}_B-\beta_{(\theta, t)})+\rho_\theta(\beta_{(\theta, t)}-\sigma(\beta_{(\theta, t)}))\\
&=&\rho_\theta(\overline{w}_B-\beta_{(\theta, t)})+\rho_\theta(\beta_{(\theta, t)}-\sigma(\beta_{(\theta, t)}))\\
&=&\rho_\theta(\overline{w}_B-\sigma(\beta_{(\theta, t)}))
\end{array}
$$
\par The second equality follows from the fact that $f_A^\Delta(\overline{v})=f_A^\Delta(\overline{w})$, and the other equalities follow from the fact that $\rho_\theta$ is always applied here to elements of $\theta(B)+\iota(A)$, over which it restricts to a group homomorphism.
\end{proof}

\begin{corollary}\label{classifDivision}
Let $S_\d^\Delta$ be the space of every type in $S_0^\Delta$ which do not divide over $\Gamma$. Then the map $S_\d^\Delta\longrightarrow f_A^\Delta(S_\d^\Delta)\times f_C(S_\d^\Delta)$ is a homeomorphism.
\end{corollary}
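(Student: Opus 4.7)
The plan is to show that the map in question is a continuous bijection from a compact space onto a Hausdorff space, hence a homeomorphism. The three ingredients are already in place: compactness of $S_\d^\Delta$ (since non-dividing is closed), injectivity (inherited from \cref{homeoRectangle}), and surjectivity onto the full rectangle (which is the real content, and uses \cref{thmDivision} combined with \cref{surjection}).

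First, I would observe that $S_\d^\Delta\subseteq S^\Delta$: by \cref{divisionPDelta}, every type in $S_0^\Delta\setminus S^\Delta$ divides over $\Gamma$, so the non-dividing types all lie in $S^\Delta$. Next, $S_\d^\Delta$ is closed in $S_0^\Delta$ because a type $p$ divides over $\Gamma$ iff it contains some formula that divides over $\Gamma$, and this is a union of basic open sets; hence $S_\d^\Delta$ is compact. Continuity of the map $p\longmapsto (f_A^\Delta(p), f_C(p))$ follows from the ($*$-)definability of $f_A^\Delta$ and $f_C$ over $\Delta$, and injectivity is immediate from \cref{homeoRectangle}, whose homeomorphism restricts to our map.

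The main step is surjectivity onto $f_A^\Delta(S_\d^\Delta)\times f_C(S_\d^\Delta)$. Given a pair $(\alpha, \gamma)$ in this product, pick $p, q\in S_\d^\Delta$ realizing $f_A^\Delta(p)=\alpha$ and $f_C(q)=\gamma$, and choose realizations $v\models p$, $v'\models q$ in the monster model. Both $v$ and $v'$ satisfy $p_0$, and both satisfy $\indep{v}{\Gamma}{\Delta}{\d}$, $\indep{v'}{\Gamma}{\Delta}{\d}$. By \cref{surjection}, there exists $w\in M$ with $f_A^\Delta(w)=f_A^\Delta(v)=\alpha$ and $f_C(w)=f_C(v')=\gamma$. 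By \cref{thmDivision}, we also have $\indep{w}{\Gamma}{\Delta}{\d}$, so $\tp(w/\Delta)\in S_\d^\Delta$ projects to $(\alpha, \gamma)$.

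Finally, a continuous bijection from a compact space to a Hausdorff space is a homeomorphism; the target $f_A^\Delta(S_\d^\Delta)\times f_C(S_\d^\Delta)$ is Hausdorff as a subspace of the product of (Hausdorff) Stone spaces over $A^*(\Delta)$ and $C^*(\Delta)$, via the identifications of \cref{stablePlongitude}. The only nontrivial obstacle in this proof is the surjectivity onto the rectangle, and that difficulty has been absorbed entirely into \cref{thmDivision}; here we merely combine it with \cref{surjection} to produce the required realization.
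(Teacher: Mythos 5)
Your proof is correct and takes essentially the same route as the paper: inclusion $S_\d^\Delta\subset S^\Delta$ from \cref{divisionPDelta}, injectivity from \cref{homeoRectangle}, surjectivity from \cref{surjection} together with \cref{thmDivision}, and the compact-to-Hausdorff argument. The paper's proof is terser (it folds the use of \cref{surjection} into the citation of \cref{thmDivision}), but the structure and ingredients match yours exactly.
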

\begin{proof}
    This map is clearly continuous. It is surjective by \cref{thmDivision}. By \cref{divisionPDelta}, we have $S_\d^\Delta\subset S^\Delta$, therefore this map is injective as a restriction of an injective map, by \cref{homeoRectangle}. We conclude by compactness and separation.
\end{proof}

\begin{corollary}\label{PSESDivisionVraiHomeo}
    Let $\mathcal{A}_\d^\Delta$ (resp. $\mathcal{C}_\d^\Delta$) be the space of $A$-types (resp. $C$-types) of realizations $u$ of $p_0$ such that $\indep{u}{\Gamma}{\Delta}{\d}$. Then $S_\d^\Delta$ is naturally homeomorphic to $\mathcal{A}_\d^\Delta\times\mathcal{C}_\d^\Delta$.
\end{corollary}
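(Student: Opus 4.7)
The plan is to combine the two preceding corollaries directly. By \cref{classifDivision}, the map $p \mapsto (f_A^\Delta(p), f_C(p))$ is a homeomorphism $S_\d^\Delta \to f_A^\Delta(S_\d^\Delta) \times f_C(S_\d^\Delta)$. By \cref{orth2ACDelta}, the two factors on the right are canonically homeomorphic to $\mathcal{A}_\d^\Delta$ and $\mathcal{C}_\d^\Delta$ respectively, via the assignments sending $f_A^\Delta(v)$ to the $A$-type $\tp_A(v/\Delta)$ and $f_C(v)$ to the $C$-type $\tp_C(v/\Delta)$. Composing these two identifications produces the desired homeomorphism $S_\d^\Delta \cong \mathcal{A}_\d^\Delta \times \mathcal{C}_\d^\Delta$, and unwinding the definitions shows that the composite is the natural map $p \mapsto (\tp_A(v/\Delta), \tp_C(v/\Delta))$ for any realization $v \models p$.

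There is essentially no obstacle once the previous corollaries are in hand. The only point worth double-checking is that the composite indeed coincides with the natural assignment of $A$- and $C$-types, which follows from the explicit form of the homeomorphism in \cref{orth2ACDelta}, together with the fact that $S_\d^\Delta \subset S^\Delta$ (by \cref{divisionPDelta}), so that \cref{orthACDelta} guarantees that the assignments $p \mapsto \tp_A(v/\Delta)$ and $p \mapsto \tp_C(v/\Delta)$ are well-defined on $S_\d^\Delta$ and together separate points.
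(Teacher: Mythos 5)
Your proposal is correct and matches the paper's intended argument: compose the homeomorphism of \cref{classifDivision} with the identifications of its two factors from \cref{orth2ACDelta}. The paper's one-line proof cites only \cref{orth2ACDelta}, leaving the use of \cref{classifDivision} implicit, so your version is just a slightly more explicit unwinding of the same composite.
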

\begin{proof}
    This follows immediately from \cref{orth2ACDelta}.
\end{proof}

It follows that $\indep{u}{\Gamma}{\Delta}{\d}$ if and only if each of the partial types $\tp_C(u/\Delta)$ and $p_0\cup \tp_A(u/\Delta)$ have a realization $v$ such that $\indep{v}{\Gamma}{\Delta}{\d}$. Note that this statement is in general not equivalent to: $\tp_C(u/\Delta)$ and $p_0\cup\tp_A(u/\Delta)$ do not divide over $\Gamma$. This is related to the issue of forking being potentially stronger than dividing, or equivalently, that it is possible for a formula not to divide while containing only types that divide. More precisely, we should remind the reader that:
\begin{remark}
In any first-order theory, given a set of parameters $\Gamma$, the following are equivalent:
\begin{itemize}
\item For every set of parameters $\Delta$, for every partial type $p$ over $\Gamma\Delta$, $p$ does not divide over $\Gamma$ if and only if $p$ admits a realization $v$ such that $\indep{v}{\Gamma}{\Gamma\Delta}{\d}$.
\item For every set of parameters $\Delta$, for every $\Gamma\Delta$-definable set $X$, $X$ does not divide over $\Delta$ if and only if $X$ admits a realization $v$ such that $\indep{v}{\Gamma}{\Gamma\Delta}{\d}$.
\item Forking coincides with dividing over $\Gamma$, i.e. every definable set which forks over $\Gamma$ divides over $\Gamma$.
\end{itemize}
\end{remark}
\par A common example of a definable set which does not divide while all its realizations divide is in the well-known cyclical order $(\mathbb{Q}, \textup{cyc})$, with $\Gamma=\emptyset$, and $\Delta=\{*\}$ a singleton. In this setting, the empty partial type does not divide over $\Gamma$ (because it never does), but every type over $\Delta$ divides over $\Gamma$, thus all the realizations $v$ of the empty partial type satisfy $\notIndep{v}{\Gamma}{\Delta}{\d}$.
\par Nonetheless, it is true in any theory that a partial type $p$ over $\Gamma\Delta$ does not fork over $\Gamma$ if and only if there exists $v$ a realization of $p$ such that $\indep{v}{\Gamma}{\Delta}{\f}$. As a consequence, the non-trivial result that we can get from this section is:

\begin{corollary}\label{conditionNecessaireDivision}
If both partial types $p_0\cup \tp_A(u/\Delta)$ and $\tp_C(u/\Delta)$ do not fork over $\Gamma$, then we have $\indep{u}{\Gamma}{\Delta}{\d}$.
\end{corollary}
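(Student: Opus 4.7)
The plan is to deduce $\tp(u/\Delta) \in S_\d^\Delta$ from the homeomorphism $S_\d^\Delta \cong \mathcal{A}_\d^\Delta \times \mathcal{C}_\d^\Delta$ of \cref{PSESDivisionVraiHomeo}. Concretely, I would verify three things: (i) $u \models p_\Delta$; (ii) $\tp_A(u/\Delta) \in \mathcal{A}_\d^\Delta$; (iii) $\tp_C(u/\Delta) \in \mathcal{C}_\d^\Delta$.

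For (i), I would argue by contradiction: suppose $u \models \phi(\nu(t(u_B)-\beta))$ for some $(\phi,t) \in E^-_0$ and $\beta \in B(\Delta)$. Then the coset $Y := \nu(\beta)+\phi(C) = \nu(t(u_B))+\phi(C)$ lies outside $\Gamma$ by definition of $E^-_0$, hence outside $\acl^{eq}(\Gamma)$ by \ref{GammaAlg}. The argument of \cref{divisionPDelta} then shows that the $C$-formula $\phi(\nu(t(x_B)-\beta))$, which is implied by $\tp_C(u/\Delta)$, divides (hence forks) over $\Gamma$, contradicting the hypothesis.

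For (ii) and (iii), I would invoke the standard fact, recalled just before the corollary, that a partial type over $\Gamma\Delta$ does not fork over $\Gamma$ iff it admits a realization $v$ with $\indep{v}{\Gamma}{\Delta}{\f}$. Applying \cref{deviationExtGlobale} in a $|\Delta|^+$-saturated extension $N \supseteq \Delta$, such a $v$ can be replaced by $\hat v \equiv_\Delta v$ with $\indep{\hat v}{\Gamma}{N}{\d}$, whence $\indep{\hat v}{\Gamma}{\Delta}{\d}$. Starting from a realization of $p_0 \cup \tp_A(u/\Delta)$, this produces $\hat v \models p_0$ (as $p_0$ is over $\Gamma$) with $\tp_A(\hat v/\Delta) = \tp_A(u/\Delta)$, witnessing (ii). Starting from a realization of $\tp_C(u/\Delta)$—which already contains $p_0$, since $p_0$ is a partial $C$-type over $\Gamma$ satisfied by $u$—produces the analogous witness for (iii).

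Combining, the pair $(\tp_A(u/\Delta), \tp_C(u/\Delta))$ lies in $\mathcal{A}_\d^\Delta \times \mathcal{C}_\d^\Delta$, so by \cref{PSESDivisionVraiHomeo} it corresponds to some $p^* \in S_\d^\Delta \subseteq S^\Delta$. By (i), $\tp(u/\Delta)$ also lies in $S^\Delta$, so the injectivity in \cref{homeoRectangle} forces $p^* = \tp(u/\Delta)$, yielding $\indep{u}{\Gamma}{\Delta}{\d}$. The main obstacle is step (i): without it, the homeomorphism \cref{homeoRectangle} would not identify $\tp(u/\Delta)$ with the pair of its $A$- and $C$-types, and the whole strategy breaks down. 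This is precisely the step that combines the full strength of the non-forking hypothesis on $\tp_C(u/\Delta)$ with the algebraicity hypothesis \ref{GammaAlg}; the example of \cref{PSESExemple} shows that bypassing it is not possible in general.
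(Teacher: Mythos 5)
Your proof is correct and takes essentially the paper's route: the corollary is read off from the rectangle structure given by \cref{homeoRectangle} and \cref{PSESDivisionVraiHomeo}, together with the general fact (recalled just before the corollary) that a non-forking partial type over $\Gamma\Delta$ admits a realization $v$ with $\indep{v}{\Gamma}{\Delta}{\f}$, hence with $\indep{v}{\Gamma}{\Delta}{\d}$. Steps (i) and (ii) are fine as written. The only imprecision is in (iii), where you assert that $\tp_C(u/\Delta)$ ``already contains $p_0$, since $p_0$ is a partial $C$-type over $\Gamma$.'' This is not literally true in the paper's terminology: the parameters of $p_0$ are imaginary cosets from $\faktor{C}{\phi(C)}(\Gamma)$, whereas $C$-formulas only allow real parameters from $C^*$, so $p_0$ is not a set of $C$-formulas. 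The correct (and intended) argument is the observation made right after \cref{defTypesPartiels}: $p_0$ is a partial type over $\Gamma\subset\Delta$ whose realization set is a preimage by $f_C$. Hence if $v'$ realizes $\tp_C(u/\Delta)$, then $f_C(v')\equiv_\Delta f_C(u)$ by \cref{stablePlongitude} and \ref{DeltaReel}, and applying a $\Delta$-automorphism carrying $f_C(u)$ to $f_C(v')$ to $u$ exhibits a realization of $p_0$ with the same $f_C$-image as $v'$, so $v'\models p_0$. This is a local repair, not a change of strategy.
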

And the converse is trivial whenever forking coincides with dividing over $\Gamma$. In fact, the converse holds whenever $\indep{-}{\Gamma}{\Delta}{\f}=\indep{-}{\Gamma}{\Delta}{\d}$. In particular, it holds when $\Delta$ is a $|\Gamma|^+$-saturated model. Let us state it formally, as it will come in handy in the next section:
\begin{corollary}\label{deviationMonstre}
Suppose $\Delta$ is a $|\Gamma|^+$-saturated model. Then we have $\indep{u}{\Gamma}{\Delta}{\f}$ if and only if both partial types $p_0\cup \tp_A(u/\Delta)$ and $\tp_C(u/\Delta)$ do not fork over $\Gamma$.
\end{corollary}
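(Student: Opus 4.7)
The plan is to reduce this to \cref{conditionNecessaireDivision} combined with the fact that forking and dividing coincide over saturated models (already recalled as a remark at the end of \cref{PSESSectDiv}, and stated in Section 2).

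For the forward implication (only if), the argument is essentially bookkeeping. First I would verify that $u$ itself realizes $p_0$: for $(\phi,t)\in E^+_0$ the formula $\nu(t(x_B-u_B))\in\phi(C)$ evaluates at $x=u$ to $\nu(t(0))=0\in\phi(C)$, using that $t$ is a group homomorphism; for $(\phi,t)\in E^-_0$, the inequalities $\nu(t(u_B))+\phi(C)\neq X$ for $X\in\faktor{C}{\phi(C)}(\Gamma)$ hold by the very definition of $E^-_0$. Consequently $\tp(u/\Delta)$ implies both partial types $\tp_C(u/\Delta)$ and $p_0\cup\tp_A(u/\Delta)$. Since forking of a partial type only depends on which formulas it implies, any forking formula implied by one of these sub-partial-types would also be implied by $\tp(u/\Delta)$; hence $\indep{u}{\Gamma}{\Delta}{\f}$ forces both of them to be non-forking over $\Gamma$.

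For the backward implication (if), I would apply \cref{conditionNecessaireDivision} directly: assuming that neither $\tp_C(u/\Delta)$ nor $p_0\cup\tp_A(u/\Delta)$ forks over $\Gamma$, we obtain $\indep{u}{\Gamma}{\Delta}{\d}$. Now since $\Delta$ is assumed to be a $|\Gamma|^+$-saturated model, the remark at the end of Section 2 yields that $\indep{-}{\Gamma}{\Delta}{\d}$ and $\indep{-}{\Gamma}{\Delta}{\f}$ coincide on types over $\Delta$. Hence $\indep{u}{\Gamma}{\Delta}{\f}$.

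There is no real obstacle here; the content of the statement has already been extracted in \cref{thmDivision} and \cref{conditionNecessaireDivision}. The only care required is the trivial verification that $u\models p_0$, which is needed so that $p_0\cup\tp_A(u/\Delta)$ is genuinely contained in (and not merely compatible with) $\tp(u/\Delta)$, and the invocation of the standard fact that non-dividing equals non-forking over a sufficiently saturated model.
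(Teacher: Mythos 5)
Your argument is correct and follows the paper's intended route: the paper obtains \cref{deviationMonstre} exactly by combining \cref{conditionNecessaireDivision} with the Section 2 remark that non-dividing and non-forking coincide over a $|\Gamma|^+$-saturated model, while the converse direction is the trivial monotonicity of non-forking for implied sub-partial-types. Your explicit check that $u\models p_0$ is a small step the paper leaves implicit (it is asserted in \cref{PSESQERque} without verification), and is worth spelling out as you did.
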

\par Now, \cref{conditionNecessaireDivision} also suggests that \cref{deviationMonstre} holds for any $\Delta$. This is what we  prove in the next section.

\section{Forking}\label{PSESDeviation}
Recall that a partial type does not fork over $\Gamma$ if and only if it admits a global completion which does not divide over $\Gamma$. Note that the monster model $M^{eq}$ satisfies the hypothesis \ref{DeltaReel}, thus all the results that we obtained for $\Delta$ also hold for $M^{eq}$, by switching to some big elementary extension of $M$. 
By \cref{deviationExtGlobale}, forking for types over $\Delta$ involves dividing for types over $M^{eq}$, thus the results of the previous sections will help us to establish here a classification of the space of every type in $S_0^\Delta$ which do not fork over $\Gamma$.

\begin{definition}
Let $\mathcal{B}_0^\Delta$ be the Boolean algebra associated to $S_0^\Delta$. Let $e^\Delta$ be the embedding $\mathcal{B}_0^\Delta\longrightarrow\mathcal{B}_0^{M^{eq}}$. Define $F_A^\Delta$ (resp. $F_C^\Delta$) as the set of every element of $\mathcal{B}_0^{\Delta}$ which is equivalent to some formula of the form $\phi(f_A^{\Delta}(x))$ (resp. $\phi(f_C(x))$), with $\phi\in\mathcal{L}_A$ (resp. $\mathcal{L}_C$) with parameters in $\Delta$.
\end{definition}
There are several ways to formalize the Stone duality. We identify by convention an element of $\mathcal{B}_0^\Delta$ with a (clopen) subset of $S_0^\Delta$. With this formalism, given $I$ an ideal of $\mathcal{B}_0^\Delta$ (for instance: the ideal of definable sets $X$ such that $p_0\cup \{X\}$ forks over $\Gamma$), a type $p\in S_0^\Delta$ is inconsistent with $I$ (i.e. $p$ does not fork over $\Gamma$) if and only if $p\not\in \cup I$.
\par Recall that in a Boolean algebra $\mathcal{B}$, a subset is an ideal (as in ring theory) if and only if it is  non-empty, downward-closed and closed under finite join. In other words, the ideal generated by some set $P$ coincides with:
$$\left\lbrace X\in\mathcal{B}|\exists n<\omega\ \exists Y_1\ldots Y_n \in P\ X\leqslant (Y_1\vee\ldots \vee Y_n)\right\rbrace$$
moreover, if $P$ is already downward-closed, then this set coincides with:
$$\left\lbrace X\in\mathcal{B}|\exists n<\omega\ \exists Y_1\ldots Y_n \in P\ X= (Y_1\vee\ldots \vee Y_n)\right\rbrace$$
in particular, if $I$, $J$ are ideals of $\mathcal{B}$, then:
$$
I+J=\left\lbrace X\vee Y|X\in I,Y\in J\right\rbrace
$$
therefore in our setting:
$$
\cup(I+J)=(\cup I)\cup(\cup J)
$$
\begin{lemma}\label{lemmeAKEDeviation}
Let $I_A$ (resp. $I_C$) be an ideal of $\mathcal{B}_0^{M^{eq}}$ generated by some subset of $F_A^{M^{eq}}$ (resp. $F_C^{M^{eq}}$). Let $X_A\in F_A^\Delta$, and $X_C\in F_C^\Delta$. Then we have $e(X_A\wedge X_C)\in I_A+I_C$ if and only if $e(X_A)\in I_A$ or $e(X_C)\in I_C$.
\end{lemma}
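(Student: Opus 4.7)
The ``if'' direction is immediate since $I_A, I_C \subseteq I_A + I_C$ and ideals are downward-closed. The plan for the converse is to argue by contradiction, leveraging the rectangle structure from \cref{surjection} but applied with parameters in $M^{eq}$ rather than $\Delta$. The main obstacle I anticipate is the bookkeeping needed to identify $e(X_A)$ and $e(X_C)$ as elements of $F_A^{M^{eq}}, F_C^{M^{eq}}$ respectively before invoking the rectangle, since $f_A^\Delta$ and $f_A^{M^{eq}}$ differ in the choice of the witnesses $\beta_{(\phi,t)}$; this is handled by the same shift-by-constant trick used in the case analysis of \cref{thmDivision}.

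First I would reduce to a convenient form. If $e(X_A \wedge X_C) \in I_A + I_C$, the paragraph preceding the lemma yields $Y_A' \in I_A$ and $Y_C' \in I_C$ with $e(X_A) \wedge e(X_C) \leq Y_A' \vee Y_C'$. Since $Y_A'$ is bounded by a finite join of elements of the generating subset of $I_A$, and $F_A^{M^{eq}}$ is closed under finite join (as $\mathcal{L}_A$-formulas are), we may replace $Y_A'$ by some $Y_A \in F_A^{M^{eq}} \cap I_A$, and similarly for $Y_C$. I would also rewrite $e(X_A)$ as a formula in $f_A^{M^{eq}}$: writing $X_A$ modulo $p_0$ as $\phi_A(f_A^\Delta(x))$, each component of $f_A^\Delta$ equals the corresponding component of $f_A^{M^{eq}}$ plus a constant $\rho_\phi(\beta'_{(\phi,t)} - \beta_{(\phi,t)}) \in A^*(M^{eq})$ on realizations of $p_0$ (since both $t(v_B) - \beta_{(\phi,t)}$ and $t(v_B) - \beta'_{(\phi,t)}$ lie in $\phi(B) + \iota(A)$ when $v \models p_0$, over which $\rho_\phi$ is a group homomorphism). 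Shifting $\phi_A$ accordingly expresses $e(X_A)$ as an $\mathcal{L}_A$-formula in $f_A^{M^{eq}}$ with $M^{eq}$-parameters, so $e(X_A) \in F_A^{M^{eq}}$, and likewise $e(X_C) \in F_C^{M^{eq}}$.

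Now suppose for contradiction that $e(X_A) \not\leq Y_A$ and $e(X_C) \not\leq Y_C$. In a sufficiently saturated extension, pick realizations $v, v' \models p_0$ with $v \models X_A \wedge \neg Y_A$ and $v' \models X_C \wedge \neg Y_C$. The analogue of \cref{surjection} with $M^{eq}$ in place of $\Delta$ (its proof is purely syntactic in the choice of the $\beta$'s and transfers verbatim) produces some $w \models p_0$ with $f_A^{M^{eq}}(w) = f_A^{M^{eq}}(v)$ and $f_C(w) = f_C(v')$. Since $X_A, Y_A$ depend on $x$ only through $f_A^{M^{eq}}(x)$ and $X_C, Y_C$ only through $f_C(x)$, this $w$ satisfies $X_A \wedge X_C \wedge \neg Y_A \wedge \neg Y_C$, contradicting $e(X_A \wedge X_C) \leq Y_A \vee Y_C$. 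Hence $e(X_A) \leq Y_A \in I_A$ or $e(X_C) \leq Y_C \in I_C$, which by downward-closure gives the desired conclusion.
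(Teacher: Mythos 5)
Your proof is correct and follows essentially the same route as the paper's: combine, via \cref{surjection} applied over $M^{eq}$, a point of $e(X_A)$ avoiding $I_A$ with a point of $e(X_C)$ avoiding $I_C$ into a single point of $e(X_A\wedge X_C)$ that avoids $\cup(I_A+I_C)$. The only presentational differences are that the paper sidesteps your $\beta$-shift bookkeeping by (implicitly) choosing the witnesses $\beta_{(\phi,t)}$ consistently so that $f_A^\Delta$ is literally a restriction of $f_A^{M^{eq}}$, and that it argues by contraposition with the whole ideals rather than by contradiction after reducing to a single pair $Y_A,Y_C$.
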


\begin{proof}
The right-to-left direction is trivial.
\par Let us show the other direction by contraposition. Suppose that $e(X_A)\not\in I_A$ and $e(X_C)\not\in I_C$. Then there exists $p\in e(X_A)$, $q\in e(X_C)$ such that $p\not\in\cup I_A$, $q\not\in \cup I_C$. By \cref{surjection}, let $r\in S_0^{M^{eq}}$ be such that $f_A^{M^{eq}}(r)=f_A^{M^{eq}}(p)$, $f_C(r)=f_C(q)$. As $f_A^\Delta$ is a restriction of $f_A^{M^{eq}}$, we have $f_A^\Delta(r)=f_A^\Delta(p)$. As $p\in e(X_A)$, and $X_A\in F_A^\Delta$, we have $r\in e(X_A)$. Likewise, we have $r\in e(X_C)$, thus $r\in e(X_A\wedge X_C)$.
\par Now, it suffices to show that $r\not\in\cup (I_A+I_C)$. Note that $\cup(I_A+I_C)=(\cup I_A)\cup(\cup I_C)$, thus it suffices to show that $r\not\in \cup I_A$ and $r\not\in\cup I_C$.
\par By hypothesis on $I_A$, we have $\cup I_A=\cup(I_A\cap F_A^{M^{eq}})$. As $f_A^{M^{eq}}(r)=f_A^{M^{eq}}(p)$, for every $X\in F_A^{M^{eq}}$, we have $r\in X$ if and only if $p\in X$. As $p\not\in \cup I_A$, we have $r\not\in\cup(I_A\cap F_A^{M^{eq}})$, thus $r\not\in\cup I_A$.
\par Likewise, $r\not\in\cup I_C$, concluding the proof.
\end{proof}

Note that this kind of reasoning is very general, it may be used in more abstract settings which are not limited to expansions of PSES. The key property that we use to make this lemma work is that the Stone space at hand maps onto a rectangle by our two projection maps $f_A$, $f_C$. However, this lemma does not apply to every definable set, it only applies to \textit{actual} rectangles, i.e. definable sets of the form $X_A\wedge X_C$ as in the statement. This is where the rectangle $S^\Delta$ comes handy, because every element of the Boolean algebra corresponding to this Stone space may be written as a finite join of rectangles. Then, with \cref{divisionPDelta}, and the fact that $S^\Delta$ may be written as the intersection of a subset of $F_C^\Delta$, we have all the tools that we need to obtain our main result about forking. Let us formalize all this:

\begin{theorem}\label{thmDeviation}
Let $p_A$ (resp. $p_C$) be the partial type generated by all formulas in $F_A^{\Delta}$ (resp. $F_C^{\Delta}$) satisfied by $u$. Then we have $\indep{u}{\Gamma}{\Delta}{\f}$ if and only if both partial types $p_0\cup p_A$ and $p_C$ do not fork over $\Gamma$.
\end{theorem}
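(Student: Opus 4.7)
The forward direction is immediate. The complete type $\tp(u/\Delta)$ implies $p_0$ (since $u$ realizes it) and implies each element of $p_A$ and $p_C$ by the very definition of these partial types (as generated by formulas satisfied by $u$). Hence both $p_0 \cup p_A$ and $p_C$ are implied by $\tp(u/\Delta)$, and any partial type implied by a non-forking complete type is itself non-forking.

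For the backward direction, assume both $p_0 \cup p_A$ and $p_C$ do not fork over $\Gamma$. I first claim $u \models p_\Delta$. If not, some formula $\phi(\nu(t(u_B) - \beta))$ holds with $(\phi, t) \in E^-_0$ and $\beta \in B(\Delta)$. Since $t$ and $\nu$ are group homomorphisms, this rewrites as $\phi(\nu(t(x_B)) - \nu(\beta))$, a $C$-formula over $\Delta$, hence lies in $F_C^\Delta$ and therefore in $p_C$. But this formula pins $\nu(t(x_B)) + \phi(C)$ to the coset $\nu(\beta) + \phi(C)$, which lies in $\Delta$ but not in $\Gamma$ since $(\phi, t) \in E^-_0$; by \ref{GammaAlg} the coset admits infinitely many $\Gamma$-conjugates, so this formula divides (a fortiori forks) over $\Gamma$, contradicting non-forking of $p_C$.

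Next, pick a $|\Gamma|^+$-saturated elementary extension $N$ of $M$ containing $\Delta$, with the witnesses $\beta_{(\phi, t)}$ for $(\phi, t) \in E^+_N$ chosen to extend the fixed choices for $(\phi, t) \in E^+_\Delta$, so that $f_A^N$ literally extends $f_A^\Delta$. By definition of non-forking of partial types, in some elementary extension of $N$ there exist realizations $v \models p_0 \cup p_A$ and $w \models p_C$ with $\indep{v}{\Gamma}{N}{\d}$ and $\indep{w}{\Gamma}{N}{\d}$. Both realize $p_0$, so Theorem \ref{thmDivision} applied at parameter set $N$ yields $z$ with $f_A^N(z) = f_A^N(v)$, $f_C(z) = f_C(w)$, and $\indep{z}{\Gamma}{N}{\d}$.

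It remains to show $z \equiv_\Delta u$, for then Fact \ref{deviationExtGlobale} gives $\indep{u}{\Gamma}{\Delta}{\f}$. By Proposition \ref{divisionPDelta} at $N$ we have $z \models p_N \supseteq p_\Delta$, and $u \models p_\Delta$ was just established, so both $u, z \in S^\Delta$. By Proposition \ref{homeoRectangle}, it suffices to show $f_A^\Delta(z) \equiv_\Delta f_A^\Delta(u)$ and $f_C(z) \equiv_\Delta f_C(u)$: restricting $f_A^N(z) = f_A^N(v)$ to the $E^+_\Delta$-coordinates gives $f_A^\Delta(z) = f_A^\Delta(v) \equiv_\Delta f_A^\Delta(u)$ because $v \models p_A$, while $f_C(z) = f_C(w) \equiv_\Delta f_C(u)$ because $w \models p_C$. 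The main obstacle is the bookkeeping needed to coherently extend the choice of witness parameters from $\Delta$ to $N$, together with the initial verification that $u \models p_\Delta$; both are essential to reduce the problem to the dividing-combination result of Theorem \ref{thmDivision}.
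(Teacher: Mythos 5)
Your argument is correct, but it takes a route that is genuinely different from the paper's. The paper's proof is Boolean-algebraic: it introduces the ideals $I_A, I_C$ of $\mathcal{B}_0^{M^{eq}}$ generated by forking $F_A$- and $F_C$-formulas, shows (via \cref{deviationMonstre}) that the ideal of forking formulas is exactly $I_A+I_C$, and then reduces to \cref{lemmeAKEDeviation}, a purely Boolean statement asserting that $e(X_A\wedge X_C)\in I_A+I_C$ forces $e(X_A)\in I_A$ or $e(X_C)\in I_C$. That lemma does the combinatorial work, and the rest of the paper's proof is bookkeeping: decomposing an arbitrary $X\in\tp(u/\Delta)$ into a rectangle $X_A\wedge X_C\wedge Z$ with $Z\in F_C^\Delta$, using \cref{homeoRectangle} and the fact that $p_\Delta$ is an $f_C$-preimage. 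Your proof instead works directly at the level of realizations: you pass to a saturated $N\supseteq\Delta$, extract realizations $v\models p_0\cup p_A$ and $w\models p_C$ that do not divide over $\Gamma$ over $N$ (not just over $\Delta$), splice them via \cref{surjection} and \cref{thmDivision} applied at parameter set $N$ to get $z$ with $\indep{z}{\Gamma}{N}{\d}$, and then invoke \cref{homeoRectangle} at $\Delta$ to show $z\equiv_\Delta u$, which yields non-forking by \cref{deviationExtGlobale}. This bypasses \cref{lemmeAKEDeviation} entirely. Both proofs ultimately rest on the same two pillars (\cref{surjection} and \cref{homeoRectangle}), and both need the preliminary reduction $u\models p_\Delta$, which you carry out the same way the paper does.

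The trade-off: your approach is more direct and arguably more transparent, since it exhibits an explicit non-dividing global extension of $\tp(u/\Delta)$ rather than reasoning about avoidance of an ideal. The paper's approach packages the key combinatorics into the standalone \cref{lemmeAKEDeviation}, which the paper explicitly notes is reusable in other settings where a type space fibers surjectively over a rectangle; it also dovetails neatly with the computation in \cref{sectPSESModeles}, where the same ideals reappear. One small point you should make explicit: your step 4 needs the standard fact that a partial type over $\Delta$ non-forking over $\Gamma$ has a \emph{realization} $v$ with $\indep{v}{\Gamma}{N}{\d}$ for a $|\Gamma|^+$-saturated $N$, which is the partial-type version of \cref{deviationExtGlobale} (the paper states the needed fact, without proof, at the end of \cref{PSESSectDiv}). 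You also correctly handle the bookkeeping of extending the witnesses $\beta_{(\phi,t)}$ from $E^+_\Delta$ to $E^+_N$ so that $f_A^N$ restricts to $f_A^\Delta$; this point is essential and easy to miss.
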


\begin{proof}
The left-to-right direction is trivial. Let us show the other direction, so suppose that $p_0\cup p_A$ and $p_C$ do not fork over $\Gamma$.
\par Let $I_A$ (resp. $I_C$) be the ideal of $\mathcal{B}_0^{M^{eq}}$ generated by formulas $\phi$ of $F_A^{M^{eq}}$ (resp. $F_C^{M^{eq}}$) such that $p_0\cup\{\phi\}$ (resp. $\phi$) forks over $\Gamma$. By \cref{deviationMonstre}, the (open) space of every type in $S_0^{M^{eq}}$ which forks over $\Gamma$ is exactly $\cup (I_A+I_C)$. It follows that the space of every type in $S_0^\Delta$ which forks over $\Gamma$ is $\cup e^{-1}(I_A+I_C)$.
\par If we had $u\not\models p_\Delta$, then no realization of $p_C$ would realize $p_\Delta$, therefore, by \ref{divisionPDelta}, we would have $\notIndep{v}{\Gamma}{\Delta}{\d}$ for every $v\models p_C$, contradicting our hypothesis that $p_C$ does not fork over $\Gamma$. So $u$ realizes $p_\Delta$.
\par Let $X$ be a $\Delta$-definable set containing $u$. As $u$ realizes $p_\Delta$, by \cref{homeoRectangle} and compactness, there exists $X_A\in F_A^\Delta$, $X_C\in F_C^\Delta$, and $Y$ a $\Delta$-definable set containing all the realizations of $p_\Delta$, such that, in $\mathcal{B}_0^\Delta$, we have $(Y\wedge X_A\wedge X_C)\leqslant X$. As $p_\Delta$ is a preimage by $f_C$, there must exist $Z\in F_C^\Delta$, containing $u$, and such that $Z\leqslant Y$. We have $(X_C\wedge Z)\in F_C^\Delta$, thus:
$$p_C\models x\in X_C\wedge x\in Z$$
\par As $p_C$ does not fork over $\Gamma$, we have $(X_C\wedge Z)\not\in e^{-1}(I_A+I_C)$, in particular $e(X_C\wedge Z)\not\in I_C$. Likewise, we have $e(X_A)\not\in I_A$, therefore by \cref{lemmeAKEDeviation} we have $e(X_A\wedge X_C\wedge Z)\not\in I_A+I_C$. In particular, this $\Delta$-definable set does not fork over $\Gamma$, and neither does $X$. This holds for all $X\in\mathcal{B}_0^\Delta$ containing $u$, so we conclude that $\indep{u}{\Gamma}{\Delta}{\f}$.
\end{proof}
\begin{corollary}\label{classifDeviation}
Let $S_\f^\Delta$ be the space of all types in $S_0^\Delta$ which do not fork over $\Gamma$. Then the map $S_\f^\Delta\longrightarrow f_A^\Delta(S_\f^\Delta)\times f_C(S_\f^\Delta)$ is a homeomorphism.
\end{corollary}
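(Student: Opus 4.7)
The plan is to mirror the proof of \cref{classifDivision}, with \cref{thmDeviation} playing the role that \cref{thmDivision} played there. Continuity is immediate since $f_A^\Delta$ and $f_C$ induce continuous maps on Stone spaces, so the three non-trivial points to handle are surjectivity, injectivity, and the topological upgrade to a homeomorphism.

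For surjectivity, I would pick $(\pi_A,\pi_C)\in f_A^\Delta(S_\f^\Delta)\times f_C(S_\f^\Delta)$ and choose witnesses $u,v\in S_\f^\Delta$ with $f_A^\Delta(u)=\pi_A$ and $f_C(v)=\pi_C$. Applying \cref{surjection} produces a realization $w$ of $p_0$ with $f_A^\Delta(w)=f_A^\Delta(u)$ and $f_C(w)=f_C(v)$. Since the partial types $p_A$ and $p_C$ in the statement of \cref{thmDeviation} are complete within $F_A^\Delta$ and $F_C^\Delta$ respectively, the equalities of projections give $p_A(w)=p_A(u)$ and $p_C(w)=p_C(v)$. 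By \cref{thmDeviation} applied to $u$ and to $v$, both $p_0\cup p_A(u)$ and $p_C(v)$ do not fork over $\Gamma$, so applying \cref{thmDeviation} in the other direction to $w$ yields $\indep{w}{\Gamma}{\Delta}{\f}$, i.e. $\tp(w/\Delta)\in S_\f^\Delta$, mapping to $(\pi_A,\pi_C)$.

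For injectivity, the key observation is that forking implies dividing for types, hence every type in $S_\f^\Delta$ does not divide over $\Gamma$. By \cref{divisionPDelta}, this forces $S_\f^\Delta\subset S^\Delta$, and then injectivity on $S^\Delta$ follows from \cref{homeoRectangle}.

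Finally, to get a homeomorphism, I would argue that $S_\f^\Delta$ is closed in the compact space $S_0^\Delta$ (the set of types containing a formula that forks over $\Gamma$ is a union of basic opens, hence open), so $S_\f^\Delta$ is compact. The target is Hausdorff as a product of subspaces of Stone spaces, so a continuous bijection from compact to Hausdorff is automatically a homeomorphism. I do not foresee a real obstacle here: the substance of the argument has already been absorbed into \cref{thmDeviation} and \cref{surjection}, and the only mildly subtle step is the observation that the completeness of $p_A(u)$ and $p_C(v)$ in their respective fragments is what allows \cref{thmDeviation} to be reapplied cleanly to the glued realization $w$.
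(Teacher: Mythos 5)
Your proposal is correct and follows essentially the same route as the paper: injectivity via \cref{divisionPDelta} and \cref{homeoRectangle} (using that forking implies dividing), surjectivity via \cref{surjection} to glue a realization from given $A$- and $C$-components and then \cref{thmDeviation} in both directions (forward to show the two relevant partial types do not fork, backward to conclude $\indep{w}{\Gamma}{\Delta}{\f}$), and the standard compact--Hausdorff upgrade. The one tacit point worth noting, which the paper also leaves implicit, is that \cref{thmDeviation} applies to any realization of $p_0$ with the \emph{same} $p_0$, since any tuple realizing $p_0$ generates the same sets $E^+_0$, $E^-_0$ and hence the same partial type $p_0$.
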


\begin{proof}
    Just as in \cref{classifDivision}, this map is a continuous injection between compact separated spaces. There remains to prove surjectivity.
    \par Let $(q_A(y),q_C(y))\in f_A^\Delta(S_\f^\Delta)\times f_C(S_\f^\Delta)$. By \cref{surjection}, let $q\in S_0^\Delta$ be such that $q(x)\models q_A(f_A^\Delta(x))$ and $q(x)\models q_C(f_C(x))$, and let $v$ realize $q$. As $(q_A(y),q_C(y))\in f_A^\Delta(S_\f^\Delta)\times f_C(S_\f^\Delta)$, each of the partial types $p_0\cup q_A(f_A^\Delta(x))$ and $q_C(f_C(x))$ admits a realization $w$ such that $\indep{w}{\Gamma}{\Delta}{\f}$. In particular, those partial types do not fork over $\Gamma$. By \cref{thmDeviation}, we have $\indep{v}{\Gamma}{\Delta}{\f}$, therefore $q$ is a preimage of $(q_A(y), q_C(y))$, concluding the proof.
\end{proof}

\begin{corollary}\label{PSESDeviationVraiHomeo}
    Let $\mathcal{A}_\f^\Delta$ (resp. $\mathcal{C}_\f^\Delta$) be the space of $A$-types (resp. $C$-types) of realizations $u$ of $p_0$ such that $\indep{u}{\Gamma}{\Delta}{\f}$. Then $S_\f^\Delta$ is naturally homeomorphic to $\mathcal{A}_\f^\Delta\times\mathcal{C}_\f^\Delta$.
\end{corollary}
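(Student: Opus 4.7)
The plan is to mirror the proof of \cref{PSESDivisionVraiHomeo} exactly, swapping dividing for forking throughout. The two ingredients I would combine are \cref{classifDeviation}, which states that $S_\f^\Delta$ is homeomorphic to the rectangle $f_A^\Delta(S_\f^\Delta)\times f_C(S_\f^\Delta)$, and the forking version of \cref{orth2ACDelta}, which identifies $\mathcal{A}_\f^\Delta$ (resp. $\mathcal{C}_\f^\Delta$) with $f_A^\Delta(S_\f^\Delta)$ (resp. $f_C(S_\f^\Delta)$).

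First, I would observe that $S_\f^\Delta\subset S^\Delta$. Indeed, forking implies dividing for types, so $S_\f^\Delta\subset S_\d^\Delta$, and by \cref{divisionPDelta} every type in $S_\d^\Delta$ lies in $S^\Delta$. Hence every $p\in S_\f^\Delta$ is realized by a tuple $u\models p_\Delta$, and \cref{orthACDelta} applies: on realizations of $p_\Delta$, the maps $v\longmapsto f_A^\Delta(v)$ and $v\longmapsto f_C(v)$ induce homeomorphisms between the space of $A$-types (resp. $C$-types) over $\Delta$ and the images of the corresponding projection of types. Restricting this identification to the subspace of realizations that additionally satisfy $\indep{v}{\Gamma}{\Delta}{\f}$, we obtain $\mathcal{A}_\f^\Delta\cong f_A^\Delta(S_\f^\Delta)$ and $\mathcal{C}_\f^\Delta\cong f_C(S_\f^\Delta)$. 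This is precisely the forking statement of \cref{orth2ACDelta}.

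Next, I would invoke \cref{classifDeviation} to get the homeomorphism $S_\f^\Delta\longrightarrow f_A^\Delta(S_\f^\Delta)\times f_C(S_\f^\Delta)$, and compose it with the product of the two homeomorphisms from the previous paragraph to yield the desired natural homeomorphism $S_\f^\Delta\cong \mathcal{A}_\f^\Delta\times \mathcal{C}_\f^\Delta$.

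I do not expect any real obstacle: all the substance is already contained in \cref{thmDeviation} and its \cref{classifDeviation}, and the remaining identification with $\mathcal{A}_\f^\Delta\times\mathcal{C}_\f^\Delta$ is the same formal manipulation that gives \cref{PSESDivisionVraiHomeo} from \cref{classifDivision} and \cref{orth2ACDelta}. The only minor point to check is the inclusion $S_\f^\Delta\subset S^\Delta$, which, as noted, is immediate from \cref{divisionPDelta}.
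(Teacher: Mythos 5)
Your proposal is correct and follows essentially the same route as the paper, which simply cites \cref{orth2ACDelta} (whose statement already includes the forking version) together with \cref{classifDeviation}; you merely unpack the forking case of \cref{orth2ACDelta} by reproving it from \cref{divisionPDelta} and \cref{orthACDelta}. One small verbal slip worth flagging: you write ``forking implies dividing for types'' to justify $S_\f^\Delta\subset S_\d^\Delta$, but the correct direction is that dividing implies forking, hence non-forking implies non-dividing --- the inclusion you draw is nonetheless the right one.
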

\begin{proof}
    This follows immediately from \cref{orth2ACDelta}.
\end{proof}

\begin{corollary}\label{PSESThmPPal}
    We have $\indep{u}{\Gamma}{\Delta}{\f}$ if and only if both partial types $\tp_C(u/\Delta)$ and $p_0\cup\tp_A(u/\Delta)$ do not fork over $\Gamma$.
\end{corollary}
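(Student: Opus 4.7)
The plan is to deduce the corollary directly from Theorem \ref{thmDeviation}. That theorem already characterizes $\indep{u}{\Gamma}{\Delta}{\f}$ as the simultaneous non-forking over $\Gamma$ of the two partial types $p_C$ and $p_0 \cup p_A$, so the task reduces to checking that $p_C$ can be replaced by $\tp_C(u/\Delta)$ and $p_0 \cup p_A$ by $p_0 \cup \tp_A(u/\Delta)$ without affecting the non-forking status over $\Gamma$.

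For the $C$-side I would simply unfold the definitions: a $C$-formula with parameters in $\Delta$ is, by \cref{defACtypes}, a formula of the form $\psi_C(\nu(x_B), x_C, \gamma) = \psi_C(f_C(x), \gamma)$ with $\psi_C \in \mathcal{L}_C$ parameter-free and $\gamma \in C^*(\Delta)$, which is precisely an element of $F_C^\Delta$. Consequently $p_C$ and $\tp_C(u/\Delta)$ are literally the same partial type, with nothing to prove. For the $A$-side, the inclusion $p_A \subseteq \tp_A(u/\Delta)$ is built into the definition of $F_A^\Delta$, so the non-forking of $p_0 \cup \tp_A(u/\Delta)$ trivially entails the non-forking of $p_0 \cup p_A$.

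The only genuine step is the converse on the $A$-side. Assuming that $p_0 \cup p_A$ does not fork over $\Gamma$, I would pick a realization $v$ with $\indep{v}{\Gamma}{\Delta}{\f}$, using the standard fact (recalled in the remark following \cref{conditionNecessaireDivision}) that non-forking of a partial type is witnessed by a non-forking realization. Since forking dominates dividing, $\tp(v/\Delta)$ does not divide over $\Gamma$, and since $v \models p_0$ places $\tp(v/\Delta)$ in $S_0^\Delta$, \cref{divisionPDelta} forces $v \models p_\Delta$. From $v \models p_A$ together with the stable embeddedness in \cref{stablePlongitude} I obtain $f_A^\Delta(v) \equiv_\Delta f_A^\Delta(u)$, and then since $u$ and $v$ both realize $p_\Delta$, \cref{orthACDelta} upgrades this to $\tp_A(v/\Delta) = \tp_A(u/\Delta)$. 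Hence $v$ realizes $p_0 \cup \tp_A(u/\Delta)$ and witnesses that this partial type does not fork over $\Gamma$. Invoking \cref{thmDeviation} then concludes the proof.

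The argument is essentially a bookkeeping translation between the technical partial types $p_A, p_C$ used in \cref{thmDeviation} and the more semantically transparent $\tp_A(u/\Delta), \tp_C(u/\Delta)$. The only point that could mislead is that the identification of the $A$-type with the fiber of $f_A^\Delta$ afforded by \cref{orthACDelta} is valid only on realizations of $p_\Delta$, not merely of $p_0$; this is exactly why \cref{divisionPDelta} must be called upon to push the non-forking witness $v$ of $p_0 \cup p_A$ into $p_\Delta$ before \cref{orthACDelta} can be applied.
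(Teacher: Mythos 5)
The overall strategy---read off the corollary from \cref{thmDeviation} by identifying $p_C$ with $\tp_C(u/\Delta)$ and observing $p_A\subseteq\tp_A(u/\Delta)$---is the right one, and the half of your argument that proves the implication from non-forking of $\tp_C(u/\Delta)$ and $p_0\cup\tp_A(u/\Delta)$ to $\indep{u}{\Gamma}{\Delta}{\f}$ is correct: it goes through \cref{thmDeviation} because the hypotheses of the corollary are at least as strong as those of the theorem.

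The problem is the step you single out as ``the only genuine step.'' You are trying to prove that non-forking of $p_0\cup p_A$ over $\Gamma$ already entails non-forking of $p_0\cup\tp_A(u/\Delta)$ over $\Gamma$. This is in fact \emph{false} in general, and the argument breaks down exactly where you gloss over it: \cref{orthACDelta} requires \emph{both} $v$ and $u$ to realize $p_\Delta$, and $u\models p_\Delta$ simply does not follow from the hypothesis that $p_0\cup p_A$ does not fork. When $u\not\models p_\Delta$, there are $(\phi,t)\in E^-_0$ and $\beta\in B(\Delta)$ with $t(u_B)-\beta\in\phi(B)+\iota(A)$, and if moreover $\rho_\phi(t(u_B)-\beta)\neq 0$ (which can certainly happen once $A^*(\Delta)\setminus\phi(A)$ is nonempty), then $\tp_A(u/\Delta)$ contains the $A$-formula $\rho_\phi(t(x_B)-\beta)\neq 0$; this forces $\nu(t(x_B))+\phi(C)$ onto a coset outside $\Gamma$ (hence, by \ref{GammaAlg}, outside $\acl^{eq}(\Gamma)$), which divides over $\Gamma$ as in the proof of \cref{divisionPDelta}. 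Meanwhile $p_A$, which only records the values of $f_A^\Delta$, i.e.\ the $\rho_\phi(t(x_B)-\beta_{(\phi,t)})$ for $(\phi,t)\in E^+_\Delta$, is blind to this coset and can be non-forking together with $p_0$. So the implication you call the ``genuine step'' fails, and more to the point, your proof of it is circular: it silently assumes the thing ($u\models p_\Delta$, hence morally $\indep{u}{\Gamma}{\Delta}{\f}$) that the left-to-right direction is supposed to establish.

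Fortunately, this step is not needed at all. The left-to-right direction of the corollary does not pass through \cref{thmDeviation}: if $\indep{u}{\Gamma}{\Delta}{\f}$ then $\tp(u/\Delta)$ does not fork over $\Gamma$, and since $u\models p_0$, both $\tp_C(u/\Delta)$ and $p_0\cup\tp_A(u/\Delta)$ are sub-types of $\tp(u/\Delta)$ and therefore do not fork either. So the full proof is: one direction is the triviality that a sub-type of a non-forking type does not fork; the other is \cref{thmDeviation} together with the containments $p_A\subseteq\tp_A(u/\Delta)$ and $p_C=\tp_C(u/\Delta)$. You should drop the ``genuine step'' entirely rather than try to repair it.
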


\section{A silly example}\label{PSESExemple}
In this section, we show a very basic example of a PSES of Abelian groups $M$ (without any expansion), with substructures $\Gamma\leqslant\Delta$ satisfying \ref{GammaAlg} and \ref{DeltaReel}, and $u\in M$ such that $\notIndep{u}{\Gamma}{\Delta}{\d}$, but both partial types $\tp_A(u/\Delta)$ and $\tp_C(u/\Delta)$ do not fork over $\Gamma$. This illustrates why the partial type $p_0$ is absolutely necessary in our main result \cref{thmDeviation}.
\par Let $M$ be the following $\Phi$-PSES of Abelian groups:

$$
\mathbb{Q}\times\{0\}\underset{\iota}{\longrightarrow}\mathbb{Q}\times\mathbb{Q}\underset{\nu}{\longrightarrow}\{0\}\times\mathbb{Q}
$$

\noindent with $\Phi=\{x=0\}$, which is indeed a fundamental system in torsion-free divisible Abelian groups. In fact, we should remind the reader that in the theory of such groups, any formula with one variable $x$ and parameters in some set $P$ is equivalent to a Boolean combination of formulas of the form $x-e=0$, with $e$ lying in the $\mathbb{Q}$-vector space generated by $P$.
\par Let $\Gamma=\dcl^{eq}(\{\nu(0, 1)\})$, $\Delta=\dcl^{eq}((1, 1))\supset\Gamma$, and let $u=(1, 1)$. Let $N$ be some $\aleph_1$-saturated, strongly $\aleph_1$-homogeneous elementary extension of $M$.
\par By quantifier elimination for torsion-free divisible Abelian groups, we have $C(\acl^{eq}(\Gamma))=\mathbb{Q}\cdot\nu(0, 1)=C(\Gamma)$, thus \ref{GammaAlg} holds, and \ref{DeltaReel} holds by definition.
\par One can check that $u\not\in\acl^{eq}(\Gamma)$, thus $\notIndep{u}{\Gamma}{\Delta}{\d}$.
\par The $C$-type of $u$ over $\Delta$ is isolated by a formula with parameters in $\Gamma$: $\nu(x)=\nu(0, 1)$. Let $v\in B(N)$ be such that $v\in \iota(A)$, but $v\neq w$ for every $w\in M$. By \cref{qe}, the type of $v+(0, 1)$ over $M$ is generated by the following partial type:

\[
\{\nu(x)=\nu(0, 1)\}
\]
\[
\cup \left\lbrace\rho_{x=0}(x-\beta)\neq 0|\beta\in B(M), \nu(\beta)=\nu(0, 1)\right\rbrace
\]
\[\cup 
\left\lbrace\rho_{x=0}(x-\beta)= 0|\beta\in B(M), \nu(\beta)\neq\nu(0, 1)\right\rbrace
\]

As this partial type is $\Aut(M^{eq}/\Gamma)$-invariant, $\tp(v+(0, 1)/M)$ is also $\Aut(M^{eq}/\Gamma)$-invariant. In particular, this type does not fork over $\Gamma$. As $v+(0, 1)$ realizes $\tp_C(u/\Delta)$, we can deduce that $\tp_C(u/\Delta)$ does not fork over $\Gamma$.
\par Using \cref{qe}, one may show that for all $\lambda,\mu\in\mathbb{Q}$, we have $(\lambda,\mu)\equiv_\Gamma(2\lambda, 2\mu)$, thus $B(\Gamma)=\{\iota(0, 0)\}$. In particular, as: $$\rho_{x=0}(\lambda\cdot u-\iota(0, 0))=\rho_{x=0}(\lambda, \lambda)=(0, 0)$$ for every $\lambda\in\mathbb{Q}$, we have $\tp_A(u/\Delta)=\tp_A(\iota(0, 0)/\Delta)$. As $\iota(0, 0)\in\Gamma$, we have of course $\indep{\iota(0, 0)}{\Gamma}{\Delta}{\f}$, thus $\tp_A(u/\Delta)$ does not fork over $\Gamma$, which concludes the example.

\section{Models}\label{sectPSESModeles}

In \cref{thmDeviation}, we reduce the problem of characterizing forking in general to the problem of characterizing forking for certain partial types which encode data related to $A$ and $C$. However, the partial types at hand are still types in the full PSES, it would be better if we could reduce the problem to checking forking in the reducts $A^*$ and $C^*$. While this is probably not possible in full generality, we isolate conditions on $\Gamma$ which make it possible. Those conditions hold in particular when $\Gamma$ is a model.

\begin{definition}\label{defPSESModeles}
    In this section, we assume that for every $\phi\in\Phi$, for every coset $X\in\faktor{C}{\phi(C)}(\Gamma)$, there exists $\beta\in B(\Gamma)$ such that $\nu(\beta)\in X$. We also assume that $\Gamma$ is generated by reals, i.e. $\Gamma$ coincides with $\dcl^{eq}(A^*(\Gamma)\cup B(\Gamma)\cup C^*(\Gamma))$.
\end{definition}

\begin{remark}
    With that assumption, we have $E^+_\Delta=E^+_0$, and the $\beta_{(\phi, t)}$ can be (and are, from now on) chosen in $B(\Gamma)$ for any $(\phi, t)\in E^+_\Delta$. In particular, $f_A^\Delta$ is $\Gamma$-$*$-definable and does not depend on $\Delta$, so we might as well call it $f_A$ in the rest of this section.
\end{remark}

\begin{lemma}\label{PSESModeleDcleq}
    We have $A^*(\dcl^{eq}(\Gamma u))=A^*(\dcl^{eq}(A^*(\Gamma)f_A(u)))$, as well as $C^*(\dcl^{eq}(\Gamma u))=C^*(\dcl^{eq}(C^*(\Gamma)f_C(u)))$.
\end{lemma}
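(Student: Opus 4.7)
The inclusion $\supseteq$ holds trivially in both cases. Indeed $f_A$ is $\Gamma$-$*$-definable (its parameters $\beta_{(\phi,t)}$ all belong to $B(\Gamma)$ under the assumption of \cref{defPSESModeles}) and $f_C$ is $\emptyset$-definable, while $A^*(\Gamma), C^*(\Gamma) \subseteq \Gamma$; hence $\dcl^{eq}(A^*(\Gamma) f_A(u))$ and $\dcl^{eq}(C^*(\Gamma) f_C(u))$ both sit inside $\dcl^{eq}(\Gamma u)$.

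For the converse inclusion I treat the $C^*$-case first since it is cleaner. Let $c \in C^*(\dcl^{eq}(\Gamma u))$ be isolated by a formula $\phi(y; \gamma, u)$ with $y$ a $C^*$-variable. Since $\Gamma$ is generated by reals, \cref{qe} combined with a compactness argument lets us take $\phi$ to be a Boolean combination of $A$-formulas and $C$-formulas in the tuple $yu$ over $\Gamma$. As $y$ has no $A^*$- or $B$-coordinates, the $A$-formulas involve only $u$, and after substituting our fixed $u$ they collapse to Boolean constants. The surviving $C$-formulas take the form $\psi_C(\nu(u_B), y, u_C, \gamma_C)$ with $\gamma_C \in C^*(\Gamma)$; these are $\mathcal{L}_C$-formulas in $y$ whose parameters all lie in $C^*(\Gamma) \cup \{\nu(u_B), u_C\} = C^*(\Gamma) \cup f_C(u)$. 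Hence $\{c\}$ is $\mathcal{L}_C$-definable over $C^*(\Gamma) f_C(u)$, giving $c \in \dcl^{eq}(C^*(\Gamma) f_C(u))$.

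The $A^*$-case proceeds along the same lines. Fix $a \in A^*(\dcl^{eq}(\Gamma u))$ isolated by $\phi(y; \gamma, u)$ with $y$ in some $A^*$-sort. By \cref{qe} and compactness we may assume $\phi$ is a Boolean combination of $A$-formulas $\psi_A(\alpha, y, u_A, (\rho_{\phi_i}(t_i(u_B) - \beta_i))_i)$ (with $\alpha \in A^*(\Gamma)$, $\beta_i \in B(\Gamma)$) and of $C$-formulas that depend only on $u$. Substituting our fixed $u$ eliminates the $C$-formulas, leaving an $A$-formula in $y$ with parameters in $\Gamma$.

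The crux is then to show that each term $\rho_{\phi_i}(t_i(u_B) - \beta_i)$, with $\beta_i \in B(\Gamma)$, already belongs to $\dcl^{eq}(A^*(\Gamma) f_A(u))$. If $(\phi_i, t_i) \in E^+_0$, the hypothesis of \cref{defPSESModeles} ensures $\beta_{(\phi_i, t_i)} \in B(\Gamma)$ with $\nu(t_i(u_B) - \beta_{(\phi_i, t_i)}) \in \phi_i(C)$; when $\beta_i - \beta_{(\phi_i, t_i)} \in \phi_i(B) + \iota(A)$ one has
\[
\rho_{\phi_i}(t_i(u_B) - \beta_i) = \rho_{\phi_i}(t_i(u_B) - \beta_{(\phi_i, t_i)}) + \rho_{\phi_i}(\beta_{(\phi_i, t_i)} - \beta_i),
\]
which is the sum of a component of $f_A(u)$ and an element of $A^*(\Gamma)$; in the contrary case $t_i(u_B) - \beta_i \notin \phi_i(B) + \iota(A)$, so the term is zero. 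If $(\phi_i, t_i) \in E^-_0$, then $\nu(t_i(u_B)) + \phi_i(C)$ lies outside $\faktor{C}{\phi_i(C)}(\Gamma)$ while $\nu(\beta_i) + \phi_i(C)$ lies inside, so $t_i(u_B) - \beta_i$ again escapes $\phi_i(B) + \iota(A)$, giving $\rho_{\phi_i}(t_i(u_B) - \beta_i) = 0$. Substituting these values converts the formula isolating $\{a\}$ into an $\mathcal{L}_A$-formula in $y$ with parameters in $A^*(\Gamma) f_A(u)$, forcing $a \in \dcl^{eq}(A^*(\Gamma) f_A(u))$. The main obstacle is precisely this computation: it relies crucially on the assumption $E^+_\Delta = E^+_0$ together with the availability of witnesses $\beta_{(\phi,t)} \in B(\Gamma)$, which is exactly what the hypotheses of \cref{defPSESModeles} secure.
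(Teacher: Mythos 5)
Your proof is correct and follows essentially the same route as the paper: the $C$-part reduces to stable embeddedness of $C^*$ (which you unfold via \cref{qe} where the paper simply cites \cref{stablePlongitude}), and the $A$-part hinges on showing each $\rho_\phi(t(u_B)-\beta)$ with $\beta\in B(\Gamma)$ lies in $\dcl^{eq}(A^*(\Gamma)f_A(u))$, by exactly the same three-case disjunction ($E^+_0$ with $\beta-\beta_{(\phi,t)}\in\phi(B)+\iota(A)$, $E^+_0$ with $\beta-\beta_{(\phi,t)}\notin\phi(B)+\iota(A)$, and $E^-_0$) used in the paper. The only difference is presentational: you spell out the reduction from the isolating formula to the $\rho_\phi$-terms more explicitly, which the paper compresses into a one-line appeal to \cref{qe}.
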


\begin{proof}
    The part of the statement about $C$ follows immediately from \cref{stablePlongitude}, and the fact that $\Gamma$ is generated by reals.
    \par As for the part about $A$, by \cref{qe}, it suffices to show that $\rho_\phi(t(u_B)-\beta)\in A^*(\dcl^{eq}(A^*(\Gamma)f_A(u)))$ for every, $\phi\in\Phi$, $\beta\in B(\Gamma)$, and every term $t$. We follow the same case disjunction as in the proof of \cref{homeoRectangle}:

    \begin{itemize}
        \item If $(\phi, t)\in E^+_0$, and $\beta-\beta_{(\phi, t)}\in\phi(B)+\iota(A)$, then:
        $$\rho_\phi(t(u_B)-\beta)=\rho_\phi(t(u_B)-\beta_{(\phi, t)})+\rho_\phi(\beta_{(\phi, t)}-\beta)$$
        We conclude as $\rho_\phi(t(u_B)-\beta_{(\phi, t)})$ is a projection of $f_A(u)$, and as $\rho_\phi(\beta_{(\phi, t)}-\beta)$ is in $A^*(\Gamma)$.
        \item If $(\phi, t)\in E^+_0$, and $\beta-\beta_{(\phi, t)}\not\in\phi(B)+\iota(A)$, then $\rho_\phi(t(u_B)-\beta)=0$.
        \item If $(\phi, t)\in E^-_0$, then $\rho_\phi(t(u_B)-\beta)=0$.
    \end{itemize}
    this concludes the proof.
\end{proof}

\begin{theorem}\label{PSESModelesThmDivision}
    The following are equivalent:

    \begin{itemize}
        \item $\indep{u}{\Gamma}{\Delta}{\d}$.
        \item $\indep{f_A(u)}{A^*(\Gamma)}{A^*(\Delta)}{\d}$ in the reduct $A^*(M)$, and $\indep{f_C(u)}{C^*(\Gamma)}{C^*(\Delta)}{\d}$ in the reduct $C^*(M)$.
    \end{itemize}
\end{theorem}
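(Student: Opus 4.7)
For the easy direction $(\Rightarrow)$, I would use descent. Assuming $\indep{u}{\Gamma}{\Delta}{\d}$ in $M$ and supposing some $\mathcal{L}_A$-formula $\phi(y,\alpha)$ over $A^*(\Delta)$ satisfied by $f_A(u)$ divides over $A^*(\Gamma)$ in $A^*(M)$, witnessed by an $A^*(\Gamma)$-indiscernible sequence $(\alpha_n)_n$: under the section hypothesis $f_A$ is $\Gamma$-$\ast$-definable, so $\phi(f_A(x),\alpha)$ is a $\Delta$-formula satisfied by $u$. By \cref{stablePlongitude} (since $\Gamma$ is generated by reals) the sequence remains $\Gamma$-indiscernible in $M$, and by \cref{divisionPreimage} the pullback divides over $\Gamma$ in $M$, contradicting the hypothesis. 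The $C$-side is symmetric.

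For the hard direction $(\Leftarrow)$, assume both reduct dividing-independences. The first step is to verify $u\models p_\Delta$: otherwise, as in the proof of \cref{divisionPDelta}, some equation $\nu(t(x_B))+\phi(C)=\nu(\beta)+\phi(C)$ with $(\phi,t)\in E^-_0$, $\beta\in B(\Delta)$ is satisfied by $u$, and by \ref{GammaAlg} its parameter coset lies outside $\acl^{eq}(\Gamma)$, so it divides over $\Gamma$ in $M$. Since this formula only involves $\nu(x_B)$, stable embedding of $C^\ast$ turns it into a formula dividing over $C^*(\Gamma)$ in $C^*(M)$ satisfied by $f_C(u)$, contradicting the $C$-hypothesis.

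Next, fix any $\Delta$-definable $X\ni u$. As $\tp(u/\Delta)\in S^\Delta$, by the homeomorphism of \cref{homeoRectangle} and compactness there exist a finite conjunction $Y$ of $p_\Delta$-formulas, $X_A(y)$ over $A^*(\Delta)$ satisfied by $f_A(u)$, and $X_C(z)$ over $C^*(\Delta)$ satisfied by $f_C(u)$, with $Y\wedge X_A(f_A(x))\wedge X_C(f_C(x))\models X$. Every formula in $p_\Delta$ is a $C$-formula in $\nu(x_B)$ (by the explicit shape of $p_0$ and $p_\Delta\setminus p_0$), so I absorb $Y$ into $X_C$ to obtain a single $C$-formula $X_C'$ over $C^*(\Delta)$ satisfied by $f_C(u)$. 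I additionally absorb the $C^*(\Gamma)$-definable coset condition $\nu(t(x_B))-\nu(\beta_{(\phi,t)})\in\phi(C)$ for each $(\phi,t)\in E^+_0$ referenced in $X_A$; these hold of $f_C(u)$ because $u\models p_0$. It suffices to show that the rectangle $X_A(f_A(x))\wedge X_C'(f_C(x))$ does not divide over $\Gamma$ in $M$.

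Given a $\Gamma$-indiscernible enumeration $(\delta_n)_n$ of $\Delta$, the subsequences $(A^*(\delta_n))_n$ and $(C^*(\delta_n))_n$ are indiscernible in the respective reducts over $A^*(\Gamma)$ and $C^*(\Gamma)$ by \cref{stablePlongitude}, so the reduct hypotheses produce $y_\ast$ realizing $\{X_A(y,A^*(\delta_n))\}_n$ in an extension of $A^*(M)$, and $z_\ast$ realizing $\{X_C'(z,C^*(\delta_n))\}_n$ in an extension of $C^*(M)$, both sitting inside a common $|\Delta|^+$-saturated extension $M^\ast$ of $M$. Mimicking the construction of \cref{surjection} in $M^\ast$ --- first lifting the $B$-part of $z_\ast$ through $\nu$ via surjectivity, then correcting by an $\iota(A)$-element (which leaves the $\nu$-image unchanged) to match the $\rho_\phi$-coordinates of $y_\ast$ --- yields a tuple $x_\ast\in M^\ast$ with $f_A(x_\ast)=y_\ast$ and $f_C(x_\ast)=z_\ast$, and hence a witness to the consistency of $\{X(x,\delta_n)\}_n$. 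The main obstacle is exactly this adapted surjection: it works because the pre-absorbed coset formulas in $X_C'$ force $z_\ast$ to lie in the cosets on which the $\rho_\phi$-coordinates of $y_\ast$ are well-defined, and purity of $\iota$ turns the remaining $\iota(A)$-adjustment into a consistent p.p.-system solvable in $M^\ast$.
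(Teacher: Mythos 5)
Your easy direction matches the paper's argument exactly (contrapositive via \cref{stablePlongitude} and \cref{divisionPreimage}). The hard direction, however, takes a genuinely different route, and this is where there is a real gap. The paper invokes \cref{indepDivisionSuitesIndisc} in each reduct to replace the $A$- and $C$-parts of the $\Gamma$-indiscernible sequence by copies that are indiscernible over $A^*(\Gamma)f_A(u)$ and $C^*(\Gamma)f_C(u)$, glues them into a single $\Gamma u$-indiscernible sequence via \cref{PSESModeleDcleq} and \cref{stablePlongitude}, and then uses $u$ \emph{itself} as the witness of consistency. You instead keep the sequence fixed and try to build a fresh witness $x_\ast$ with $f_A(x_\ast)=y_\ast$ and $f_C(x_\ast)=z_\ast$. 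That lifting step does not follow from \cref{surjection}: that proposition produces a common preimage of $\bigl(f_A^\Delta(v),f_C(v')\bigr)$ only for $v$, $v'$ that are themselves realizations of $p_0$ in $M$ (one uses $v_B$ directly in its proof), it does \emph{not} assert that $(f_A^\Delta,f_C)$ surjects onto all of the relevant $A^\ast$- and $C^\ast$-sorts. Your $y_\ast$ is an arbitrary realization of $\{X_A(y,A^*(\delta_n))\}_n$ in the $A^\ast$-sorts and need not lie in the image of $f_A$. Concretely, after lifting the $B$-part of $z_\ast$ to $x_B^{(0)}$, the correction step requires $\alpha'\in A(M^\ast)^{|u_B|}$ with $\pi_{\phi_i}(t_i(\alpha'))=a^{(\ast)}_{(\phi_i,t_i)}-\rho_{\phi_i}\bigl(t_i(x_B^{(0)})-\beta_{(\phi_i,t_i)}\bigr)$ for all relevant $i$; this demands that the right-hand tuple lie in the image of the $\emptyset$-definable homomorphism $\alpha'\mapsto\bigl(\pi_{\phi_i}(t_i(\alpha'))\bigr)_i$, which is in general a proper subgroup of $\prod_i\faktor{A}{\phi_i(A)}$. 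The coset conditions you absorbed into $X_C'$ only guarantee that $t_i(x_B^{(0)})-\beta_{(\phi_i,t_i)}\in\phi_i(B)+\iota(A)$, i.e.\ that the $\rho_{\phi_i}$'s are meaningful, not that the prescribed target values are reachable; and purity of $\iota$ is not the relevant tool here, since the obstruction lives entirely inside $A$ and its quotients.

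The gap can be closed, but it requires controlling the \emph{type} of $y_\ast$, not merely a single formula. Since $\tp(f_A(u)/A^*(\Delta))$ does not divide over $A^*(\Gamma)$, one may take $y_\ast\equiv_{A^*(\Delta)}f_A(u)$ (apply an $A^*(\Delta)$-automorphism sending the shifted sequence from \cref{indepDivisionSuitesIndisc} back to the original one), and likewise $z_\ast\equiv_{C^*(\Delta)}f_C(u)$. Then \cref{stablePlongitude} together with \ref{DeltaReel} gives $(y_\ast,z_\ast)\equiv_\Delta\bigl(f_A(u),f_C(u)\bigr)$, and strong homogeneity yields $\sigma\in\Aut(M^{eq}/\Delta)$ with $\sigma\bigl(f_A(u)\bigr)=y_\ast$, $\sigma\bigl(f_C(u)\bigr)=z_\ast$; since $f_A$ and $f_C$ are $\Gamma$-definable, $x_\ast:=\sigma(u)$ is the desired common preimage and no explicit lift through $\nu$ is needed. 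At that point your argument essentially reconverges with the paper's: the real content of the theorem is precisely that the two reduct-witnesses can be chosen coherently over $\Delta$, and passing to formula-level witnesses $y_\ast$, $z_\ast$ discards exactly the coherence one needs.
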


\begin{proof}
    The proof may be seen as an easier version than that of \cref{thmDivision}.
    \par Suppose $\notIndep{f_A(u)}{A^*(\Gamma)}{A^*(\Delta)}{\d}$ in $A^*(M)$. Let $\phi_A(y, z)\in\mathcal{L}_A$, and let $\delta\in A^*(\Delta)$ be such that $A^*(M)\models \phi_A(f_A(u), \delta)$, and $\phi_A(y, \delta)$ divides over $A^*(\Gamma)$. Let $(\delta_i)_i$ be a $A^*(\Gamma)$-indiscernible sequence witnessing division. By \cref{stablePlongitude}, and as $\Gamma$ is generated by reals, $(\delta_i)_i$ is actually $\Gamma$-indiscernible. It follows that $\phi_A(y, \delta)$ divides over $\Gamma$, therefore $\phi_A(f_A(x), \delta)$ divides over $\Gamma$ by \cref{divisionPreimage}, and $\notIndep{u}{\Gamma}{\Delta}{\d}$.
    \par Likewise, if $\notIndep{f_C(u)}{C^*(\Gamma)}{C^*(\Delta)}{\d}$ in $C^*(M)$, then $\notIndep{u}{\Gamma}{\Delta}{\d}$, therefore the first condition of the statement implies the second one.
    \par Conversely, suppose by contradiction that the first condition fails, and the second one holds. As $\indep{f_C(u)}{C^*(\Gamma)}{C^*(\Delta)}{\d}$, $u$ realizes $p_\Delta$ by the proof of \cref{divisionPDelta}. By \cref{homeoRectangle}, there must exist a $\Delta$-definable set $X$ implied by $p_\Delta$, $\alpha$ a tuple from $A^*(\Delta)$, $\gamma$ a tuple from $C^*(\Delta)$ and formulas $\phi_A(y, \alpha)\in\mathcal{L}_A, \phi_C(z, \gamma)\in\mathcal{L}_C$ such that: $$M\models \phi_A(f_A(u), \alpha)\wedge \phi_C(f_C(u), \gamma)$$ and the formula $x\in X\wedge \phi_A(f_A(x), \alpha)\wedge\phi_C(f_C(x), \gamma)$ divides over $\Gamma$. As $\tp_C(u/\Delta)$ implies $p_\Delta$, we may assume $X$ is implied by $\phi_C(f_C(x), \gamma)$, therefore the formula $\phi_A(f_A(x), \alpha)\wedge\phi_C(f_C(x), \gamma)$ divides over $\Gamma$. Let $(\alpha_i\gamma_i)_i$ be a $\Gamma$-indiscernible sequence containing $\alpha\gamma$, and let us show that it does not witness division for this definable set, yielding a contradiction. By the second condition, and \cref{indepDivisionSuitesIndisc}, let $(\alpha'_i)_i\equiv_{A^*(\Delta)}(\alpha_i)_i$, $(\gamma'_i)_i\equiv_{C^*(\Delta)}(\gamma_i)_i$ be such that $(\alpha'_i)_i$ is $A^*(\Gamma)f_A(u)$-indiscernible, and $(\gamma'_i)_i$ is $C^*(\Gamma)f_C(u)$-indiscernible. By \cref{PSESModeleDcleq}, and \cref{stablePlongitude} applied to $\dcl^{eq}(\Gamma u)$, the sequence $(\alpha'_i\gamma'_i)_i$ is in fact $\Gamma u$-indiscernible. However, as there exists $j$ such that $\alpha_j\gamma_j=\alpha\gamma$, we also have $\alpha'_j\gamma'_j=\alpha\gamma$. In particular, we have $M\models \phi_A(f_A(u), \alpha'_i)\wedge \phi_C(f_C(u), \gamma'_i)$ for every $i$ by indiscernibility. It follows that $$\left\lbrace \phi_A(f_A(x), \alpha'_i)\wedge \phi_C(f_C(x), \gamma'_i)|i \right\rbrace$$ is consistent, hence the same can be said of $$\left\lbrace \phi_A(f_A(x), \alpha_i)\wedge \phi_C(f_C(x), \gamma_i)|i \right\rbrace$$ concluding the proof.
\end{proof}

\begin{corollary}\label{PSESModelesDivisionFinal}
    Let $U$ be the substructure of $M$ generated by $u$ and by $A^*(\Gamma)\cup B(\Gamma)\cup C^*(\Gamma)$. Then we have $\indep{U}{\Gamma}{\Delta}{\d}$ if and only if we have $\indep{A^*(U)}{A^*(\Gamma)}{A^*(\Delta)}{\d}$ and $\indep{C^*(U)}{C^*(\Gamma)}{C^*(\Delta)}{\d}$ in the respective reducts $A^*(M)$ and $C^*(M)$.
\end{corollary}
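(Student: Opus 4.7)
The plan is to apply \cref{PSESModelesThmDivision} to $u$ and then translate the resulting equivalence from the auxiliary tuples $f_A(u), f_C(u)$ to the real-sort tuples $A^*(U), C^*(U)$. First I would observe that $U$ is interdefinable with $u$ over $\Gamma$ in $M$: since $U$ is generated by $u$ together with the reals of $\Gamma$, and these reals all lie in $\dcl^{eq}(\Gamma u)$, we have $U \subseteq \dcl^{eq}(\Gamma u)$, while trivially $u \subseteq U$. Dividing over $\Gamma$ is invariant under such interdefinability, so $\indep{U}{\Gamma}{\Delta}{\d}$ is equivalent to $\indep{u}{\Gamma}{\Delta}{\d}$, which by \cref{PSESModelesThmDivision} is in turn equivalent to the conjunction of $\indep{f_A(u)}{A^*(\Gamma)}{A^*(\Delta)}{\d}$ in $A^*(M)$ and $\indep{f_C(u)}{C^*(\Gamma)}{C^*(\Delta)}{\d}$ in $C^*(M)$.

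It therefore suffices to check, in each reduct separately, that $f_A(u)$ and $A^*(U)$ (respectively, $f_C(u)$ and $C^*(U)$) are interdefinable over $A^*(\Gamma)$ (respectively, $C^*(\Gamma)$). One direction is immediate from the construction of $U$: we have $u_A \subseteq A^*(U)$, and for every $(\phi, t) \in E^+_0$ the chosen constant $\beta_{(\phi,t)}$ lies in $B(\Gamma) \subseteq U$, so the component $\rho_\phi(t(u_B) - \beta_{(\phi,t)})$ of $f_A(u)$ is obtained from elements of $U$ by parameter-free language operations and hence lies in $A^*(U)$; likewise $f_C(u) = (\nu(u_B), u_C) \subseteq C^*(U)$. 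The reverse inclusion is essentially the content of \cref{PSESModeleDcleq}: combined with $U \subseteq \dcl^{eq}(\Gamma u)$, it gives $A^*(U) \subseteq A^*(\dcl^{eq}(A^*(\Gamma) \cup f_A(u)))$ and the symmetric statement for $C$.

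The final step is to transfer these $\dcl^{eq}$-inclusions from the ambient $M$ into the reducts $A^*(M)$ and $C^*(M)$, so that the interdefinabilities hold in the sense relevant to \cref{PSESModelesThmDivision}. This is the standard $\dcl^{eq}$-consequence of stable embeddedness, as embodied in \cref{stablePlongitude}: if $a \in A^*(M)$ is definable in $M$ over a set $P \subseteq A^*(M)$, then the $\mathcal{L}_A$-type of $a$ over $P$ already determines its $\mathcal{L}$-type, forcing $a$ to be definable in the reduct $A^*(M)$ over $P$ as well. Combining these interdefinabilities with the equivalence delivered by \cref{PSESModelesThmDivision} yields the corollary. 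I expect the transfer from $\dcl^{eq}$ in $M$ to $\dcl^{eq}$ in the reduct to be the only mildly delicate point, as \cref{stablePlongitude} is phrased at the level of types rather than definable closure; but this step is routine once stable embeddedness is invoked.
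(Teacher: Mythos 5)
Your overall architecture — reduce to \cref{PSESModelesThmDivision} via interdefinability, with $f_A(u)\subseteq A^*(U)$ by terms and the reverse via \cref{PSESModeleDcleq} — is sound, and your first step ($U$ is interdefinable with $u$ over $\Gamma$) is exactly right. But you have correctly located the weak joint, and it is not quite as routine as you hope. What you need for the final step is that $A^*(\dcl^{eq}_M(P))\subseteq\dcl_{A^*(M)}(P)$ for $P=A^*(\Gamma)\cup f_A(u)$, i.e., \emph{formula-level} stable embeddedness of $A^*$. \cref{stablePlongitude} only gives the type-level version: for $\Gamma'$ a $\dcl^{eq}$-closed, real-generated set, two $A^*$-tuples have the same $M$-type over $\Gamma'$ iff they have the same $A^*(M)$-type over $A^*(\Gamma')$. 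Applying this with $\Gamma'=\dcl^{eq}_M(P)$ shows that $a\in A^*(\Gamma')$ is definable in $A^*(M)$ over $A^*(\Gamma')$, not over $P$ — and $A^*(\Gamma')$ is precisely the set you are trying to show is $P$-definable, so this reading of \cref{stablePlongitude} is circular. The formula-level statement does follow from \cref{qe} by a standard compactness argument (every $M$-formula in $A^*$-variables over real-generated parameters is equivalent to a Boolean combination of $A$-formulas, which over $A^*$-valued parameters collapse to $\mathcal{L}_A$-formulas), but it is not among the facts recorded in the paper, so invoking \cref{stablePlongitude} alone leaves a real gap.

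The paper's own one-line proof avoids this detour entirely by staying at the term level: the case analysis that proves \cref{PSESModeleDcleq} in fact shows that every element of $A^*(U)$ is an $\mathcal{L}_A$-term in $f_A(u)$ and $A^*(\Gamma)$ (each $\rho_\phi(t(u_B)-\beta)$ with $\beta\in B(\Gamma)$ is either $0$, or equals a component $\rho_\phi(t(u_B)-\beta_{(\phi,t)})$ of $f_A(u)$ plus $\rho_\phi(\beta_{(\phi,t)}-\beta)\in A^*(\Gamma)$), and symmetrically for $C$. Combined with the obvious inclusion $f_A(u)\subseteq A^*(U)$, this gives interdefinability of $A^*(U)$ with $f_A(u)$ over $A^*(\Gamma)$ \emph{inside the reduct} directly, with no appeal to $\dcl^{eq}$ in $M$ or to stable embeddedness. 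If you wish to keep your route through $\dcl^{eq}$, you should spell out and justify the strengthening of \cref{stablePlongitude} to the formula level; otherwise, recasting the argument in terms as above closes the gap more cleanly.
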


\begin{proof}
    This follows from the fact that $f_A$ and $f_C$ are defined using terms from the language of $M$ with parameters in $\Gamma$.
\end{proof}

\begin{theorem}
    The following are equivalent:

    \begin{itemize}
        \item $\indep{u}{\Gamma}{\Delta}{\f}$.
        \item $\indep{f_A(u)}{A^*(\Gamma)}{A^*(\Delta)}{\f}$ in the reduct $A^*(M)$, and $\indep{f_C(u)}{C^*(\Gamma)}{C^*(\Delta)}{\f}$ in the reduct $C^*(M)$.
    \end{itemize}
\end{theorem}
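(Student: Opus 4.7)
The plan is to bootstrap \cref{PSESModelesThmDivision} from dividing to forking by working over a $|\Gamma|^+$-saturated model $N\supset\Gamma\Delta$ and invoking \cref{deviationExtGlobale} twice: once in $M$, and once separately inside each reduct $A^*(M)$ and $C^*(M)$.

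For the forward direction, assume $\indep{u}{\Gamma}{\Delta}{\f}$ and pick such an $N$. By \cref{deviationExtGlobale}, choose $v\equiv_{\Gamma\Delta}u$ with $\indep{v}{\Gamma}{N}{\d}$. Applying \cref{PSESModelesThmDivision} to the triple $(v,\Gamma,N)$ yields $\indep{f_A(v)}{A^*(\Gamma)}{A^*(N)}{\d}$ in $A^*(M)$ and the analogous $C^*$-statement. Reducts inherit saturation, so $A^*(N)$ is $|A^*(\Gamma)|^+$-saturated in $A^*(M)$; moreover, $v\equiv_\Delta u$ combined with \cref{stablePlongitude} and \ref{DeltaReel} gives $f_A(v)\equiv_{A^*(\Delta)}f_A(u)$. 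A second application of \cref{deviationExtGlobale}, this time inside $A^*(M)$, converts the dividing statement about $f_A(v)$ into $\indep{f_A(u)}{A^*(\Gamma)}{A^*(\Delta)}{\f}$. The same argument settles the $C^*$-clause.

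For the converse, assume both reduct-independence statements and again fix $N\supset\Gamma\Delta$ a $|\Gamma|^+$-saturated model. Use \cref{deviationExtGlobale} inside each reduct to produce tuples $v_A\equiv_{A^*(\Delta)}f_A(u)$ and $v_C\equiv_{C^*(\Delta)}f_C(u)$ such that $\indep{v_A}{A^*(\Gamma)}{A^*(N)}{\d}$ and $\indep{v_C}{C^*(\Gamma)}{C^*(N)}{\d}$. Since $A^*$ and $C^*$ sit on disjoint sorts of the monster, $v_A$ and $v_C$ are realized jointly, and \cref{stablePlongitude} (valid here by \ref{DeltaReel}) upgrades their types over $A^*(\Delta)$ and $C^*(\Delta)$ into the single equality $v_Av_C\equiv_\Delta f_A(u)f_C(u)$ in $M$. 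Strong homogeneity then provides $\sigma\in\Aut(M^{eq}/\Delta)$ sending $f_A(u)f_C(u)$ to $v_Av_C$; setting $v:=\sigma(u)$ and using that $f_A,f_C$ are $\Gamma$-definable (so commute with $\sigma$), we get $v\equiv_{\Gamma\Delta}u$ with $f_A(v)=v_A$ and $f_C(v)=v_C$. \Cref{PSESModelesThmDivision} applied to $(v,\Gamma,N)$ yields $\indep{v}{\Gamma}{N}{\d}$, and a final invocation of \cref{deviationExtGlobale} gives $\indep{u}{\Gamma}{\Delta}{\f}$.

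The only step that is not pure bookkeeping is the joint lift of $(v_A,v_C)$ back to a single realization $v$ in $M$ with the correct type over $\Delta$. This rests entirely on the disjoint-sorts stable embeddedness recorded in \cref{stablePlongitude}, together with the \cref{defPSESModeles}-hypothesis on $\Gamma$, which is what makes $f_A$ a $\Gamma$-$*$-definable map independent of $\Delta$ and thus lets the automorphism $\sigma$ do its job.
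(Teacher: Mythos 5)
Your proof is correct, and it takes a genuinely different route from the one in the paper. The paper derives this result inside the Boolean-algebra/ideal formalism of Section~\ref{PSESDeviation}: it shows via \cref{PSESModelesThmDivision} that the forking ideal of $\mathcal{B}_0^{M^{eq}}$ equals $I_A+I_C$ for the two ideals generated by formulas factoring through $f_A$ and $f_C$, deduces the forward implication by pulling a forking cover of $\tp(f_A(u)/A^*(\Delta))$ back into $\cup e^{-1}(I_A)$, and handles the converse via \cref{homeoRectangle} together with \cref{lemmeAKEDeviation}. Your argument instead bootstraps \cref{PSESModelesThmDivision} through a $|\Gamma|^+$-saturated model using \cref{deviationExtGlobale}, once globally and once in each reduct — essentially replaying the $\cref{thmDivision}\Rightarrow\cref{deviationMonstre}$ step, which becomes available here precisely because the hypothesis of \cref{defPSESModeles} makes $f_A$ a $\Gamma$-$*$-definable map independent of the parameter set, so it commutes with $\Gamma$-automorphisms and lets you transport the pair $(v_A,v_C)$ to a single realization $v\equiv_\Delta u$. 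Each approach has its merits: yours is shorter and self-contained given \cref{PSESModelesThmDivision}, \cref{stablePlongitude} and \cref{deviationExtGlobale}, while the paper's keeps the argument uniform with the proof of the general theorem \cref{thmDeviation}, reusing the ideal machinery it already set up. Two points to make fully explicit in a writeup: (1) $A^*(N)$ is indeed a $|A^*(\Gamma)|^+$-saturated elementary substructure of $A^*(M)$ because $\mathcal{L}_A$-types over small subsets of $A^*(N)$ are $\mathcal{L}$-types over small subsets of $N$ (with the sort constraint), so saturation of $N$ transfers; (2) when applying \cref{PSESModelesThmDivision} to the triple $(v,\Gamma,N)$ one should note that $v\equiv_\Gamma u$ guarantees that $p_0$, $E_0^+$ and $f_A$ are unchanged, and that $N$, being a model, automatically satisfies \ref{DeltaReel}.
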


\begin{proof}
    Let us use the same notations as in \cref{PSESDeviation}.
    \par By \cref{PSESModelesThmDivision}, the ideal of $\mathcal{B}_0^{M^{eq}}$ generated by definable sets which fork over $\Gamma$ coincides with $I_A+I_C$, where:
    \begin{itemize}
        \item $I_A$ is the ideal generated by formulas in $F_A^{M^{eq}}$ of the form $\phi_A(f_A(x), \alpha)$, where $\phi_A(y, \alpha)$ divides over $A^*(\Gamma)$ in the reduct $A^*(M)$.
        \item Likewise for $I_C$, by replacing $A^*$, $f_A$ and $F_A^{M^{eq}}$ by $C^*$, $f_C$ and $F_C^{M^{eq}}$.
    \end{itemize}
    Note that, without the hypothesis of this section, those ideals are in general smaller than the ones defined in the proof of \cref{thmDeviation}.
    \par Suppose $\notIndep{f_A(u)}{A^*(\Gamma)}{A^*(\Delta)}{\f}$ in $A^*(M)$. Then there are finitely many formulas $(\phi_i(f_A(x), \alpha))_i$ in $I_A$ such that $\tp(f_A(u)/A^*(\Delta))\models\bigvee\limits_i \phi_i(y, \alpha)$. It follows that $\tp(u/\Delta)\in \cup e^{-1}(I_A)$, therefore we have $\notIndep{u}{\Gamma}{\Delta}{\f}$.
    \par Likewise, if $\notIndep{f_C(u)}{C^*(\Gamma)}{C^*(\Delta)}{\f}$, then $\notIndep{u}{\Gamma}{\Delta}{\f}$, which proves the top-to-bottom direction of the statement.
    \par Conversely, suppose $\notIndep{u}{\Gamma}{\Delta}{\f}$. We may assume $u$ realizes $p_\Delta$, for otherwise $\notIndep{f_C(u)}{C^*(\Gamma)}{C^*(\Delta)}{\d}$. As in \cref{PSESModelesThmDivision}, by \cref{homeoRectangle}, and the fact that $\tp_C(u/\Delta)$ implies $p_\Delta$, there exists $\Delta$-definable sets $X_A\in F_A^\Delta$, $X_C\in F_C^\Delta$ such that $X_A\wedge X_C$ forks over $\Gamma$, and $u\in X_A\wedge X_C$. By \cref{lemmeAKEDeviation}, either $X_A\in I_A$, in which case $\notIndep{f_A(u)}{A^*(\Gamma)}{A^*(\Delta)}{\f}$ in $A^*(M)$, or $X_C\in I_C$, in which case $\notIndep{f_C(u)}{C^*(\Gamma)}{C^*(\Delta)}{\f}$ in $C^*(M)$. This concludes the proof.
\end{proof}

\begin{corollary}\label{PSESModelesDeviation}
    Let $U$ be the substructure of $M$ generated by $u$ and by $A^*(\Gamma)\cup B(\Gamma)\cup C^*(\Gamma)$. Then we have $\indep{U}{\Gamma}{\Delta}{\f}$ if and only if we have $\indep{A^*(U)}{A^*(\Gamma)}{A^*(\Delta)}{\f}$ and $\indep{C^*(U)}{C^*(\Gamma)}{C^*(\Delta)}{\f}$ in the respective reducts $A^*(M)$ and $C^*(M)$.
\end{corollary}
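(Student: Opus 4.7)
The plan is to deduce \cref{PSESModelesDeviation} from the preceding theorem by a short interdefinability argument, in the same spirit as the one-sentence proof of \cref{PSESModelesDivisionFinal}. Since $U$ is generated by $u$ together with $A^*(\Gamma) \cup B(\Gamma) \cup C^*(\Gamma) \subseteq \Gamma$, we have $U \subseteq \dcl(\Gamma u)$ and $u \subseteq U$. Non-forking independence is invariant under replacing the left-hand tuple by an interdefinable one over the base, so $\indep{U}{\Gamma}{\Delta}{\f}$ is equivalent to $\indep{u}{\Gamma}{\Delta}{\f}$. By the preceding theorem, the latter is equivalent to the conjunction $\indep{f_A(u)}{A^*(\Gamma)}{A^*(\Delta)}{\f}$ in $A^*(M)$ and $\indep{f_C(u)}{C^*(\Gamma)}{C^*(\Delta)}{\f}$ in $C^*(M)$. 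It thus suffices to show that each of these conditions is equivalent to the corresponding one with $f_A(u)$ (resp.\ $f_C(u)$) replaced by $A^*(U)$ (resp.\ $C^*(U)$).

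For the $A$-side I would verify that $f_A(u)$ and (any enumeration of) $A^*(U)$ are interdefinable over $A^*(\Gamma)$ inside the reduct $A^*(M)$. Under \cref{defPSESModeles} we have $E^+_\Delta = E^+_0$ and each $\beta_{(\phi, t)}$ may be taken in $B(\Gamma) \subseteq U$; hence every entry of $f_A(u)$ is either $u_A$ or $\rho_\phi(t(u_B) - \beta_{(\phi, t)})$ with $t$ a parameter-free $\mathcal{L}_B$-term, and being built from elements of $U$ via symbols of the language, it lies in $A^*(U)$. Conversely, \cref{PSESModeleDcleq} yields $A^*(U) \subseteq A^*(\dcl^{eq}(\Gamma u)) = A^*(\dcl^{eq}(A^*(\Gamma) f_A(u)))$; by \cref{stablePlongitude}, the reduct $A^*(M)$ is stably embedded in $M$, so this definable closure can be computed inside $A^*(M)^{eq}$. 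Hence $A^*(U)$ and $A^*(\Gamma) \cup f_A(u)$ have the same $\dcl^{eq}$ in $A^*(M)^{eq}$, giving exactly the interdefinability we need, and the forking conditions coincide. The same argument applies to $C$, where it is even simpler since $f_C(u) = (\nu(u_B), u_C)$ lies in $C^*(U)$ by definition, and similarly $C^*(U) \subseteq C^*(\dcl^{eq}(C^*(\Gamma) f_C(u)))$ by \cref{PSESModeleDcleq} together with \cref{stablePlongitude}.

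The main obstacle — which is really quite mild — is ensuring that the $\dcl^{eq}$-identity in \cref{PSESModeleDcleq}, stated inside the full $M^{eq}$, can be transported to $A^*(M)^{eq}$; this is exactly the content of the strong stable embedding provided by \cref{stablePlongitude}. Once this bookkeeping is carried out, the forking version of the corollary follows at once from the preceding theorem, in direct parallel with how \cref{PSESModelesDivisionFinal} follows from \cref{PSESModelesThmDivision}.
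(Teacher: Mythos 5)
The paper leaves \cref{PSESModelesDeviation} without an explicit proof, treating it as the direct analogue of \cref{PSESModelesDivisionFinal} (whose one-line proof invokes that $f_A$, $f_C$ are built from $\mathcal{L}$-terms with parameters in $\Gamma$, implicitly together with \cref{PSESModeleDcleq}); your proof is correct and spells out exactly that intended argument. One remark: the transport of the $\dcl^{eq}$-equality from $M^{eq}$ to $A^*(M)^{eq}$ that you flag as the only delicate point can be avoided entirely, since the case analysis in the proof of \cref{PSESModeleDcleq} in fact shows that $A^*(U)$ is already the $\mathcal{L}_A$-substructure of $A^*(M)$ generated by $A^*(\Gamma)\cup f_A(u)$ (each $\rho_\phi(t(u_B)-\beta)$ is either $0$ or a sum of a coordinate of $f_A(u)$ and an element of $A^*(\Gamma)$), giving the interdefinability over $A^*(\Gamma)$ inside $A^*(M)$ directly.
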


\bibliographystyle{plain}
\bibliography{biblio}

\end{document}